\numberwithin{equation}{section}
\numberwithin{figure}{section}
\theoremstyle{plain}
\newtheorem{thm}{\protect\theoremname}[section]
  \theoremstyle{plain}
  \newtheorem{lem}[thm]{\protect\lemmaname}
  \theoremstyle{definition}
  \newtheorem{defn}[thm]{\protect\definitionname}
  \theoremstyle{plain}
  \newtheorem{cor}[thm]{\protect\corollaryname}
  \theoremstyle{plain}
  \newtheorem{conjecture}[thm]{\protect\conjecturename}
  \providecommand{\conjecturename}{Conjecture}
  \providecommand{\corollaryname}{Corollary}
  \providecommand{\definitionname}{Definition}
  \providecommand{\lemmaname}{Lemma}
\providecommand{\theoremname}{Theorem}
\begin{document}

\title{On the growth of Sudler's sine product $\prod_{r=1}^{n}|2\sin\pi r\omega|$
\\
at the golden rotation number}

\author{Paul Verschueren$\dagger$}

\thanks{$\dagger$Corresponding author: paul@verschueren.org.uk}

\author{Ben Mestel{*}}

\thanks{{*}Ben.Mestel@open.ac.uk}

\address{Department of Mathematics and Statistics}

\address{The Open University}

\address{Milton Keynes}

\address{MK7 6AA, UK}
\begin{abstract}
We study the growth at the golden rotation number $\omega=(\sqrt{5}-1)/2$
of the function sequence $P_{n}(\omega)=\prod_{r=1}^{n}|2\sin\pi r\omega|$.
This sequence has been variously studied elsewhere as a skew product
of sines, Birkhoff sum, q-Pochhammer symbol (on the unit circle),
and restricted Euler function. In particular we study the Fibonacci
decimation of the sequence $P_{n}$, namely the sub-sequence $Q_{n}=\left|\prod_{r=1}^{F_{n}}2\sin\pi r\omega\right|$
for Fibonacci numbers $F_{n}$, and prove that this renormalisation
subsequence converges to a constant. From this we show rigorously
that the growth of $P_{n}(\omega)$ is bounded by power laws. This
provides the theoretical basis to explain recent experimental results
reported by Knill and Tangerman (Self-similarity and growth in Birkhoff
sums for the golden rotation. Nonlinearity, 24(11):3115\textendash 3127,
2011).

\end{abstract}
\maketitle
\emph{Keywords:} Asymptotic growth, renormalisation, self-similarity,
sine product

\emph{AMS 2010:} 37D45, 26A12, 37E40, 41A60, 60G18

\section{Introduction}

\begin{figure}
\noindent \centering{}\includegraphics[scale=0.5]{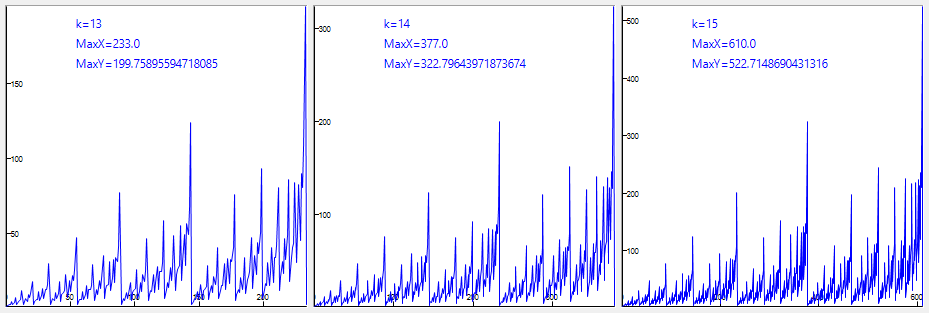}\protect\caption{\label{fig:Pnn}Renormalised graphs of $P_{n}(\omega)$ against $n$
in the range $1...F_{k}$. Notable features are the self-similarity
of the function and the almost linear growth of the peaks. Note also
that the peaks are substantially higher than other values of the function.}
\end{figure}

We will study the sequence of positive real functions $P_{n}(\omega)$
which we define as follows: 

\begin{equation}
P_{n}(\omega)=\prod_{r=1}^{n}|2\sin\pi r\omega|\label{eq:BasePn}
\end{equation}
This function sequence arises in a surprising number of fields of
pure and applied mathematics and physics, disguised by a number of
different representations and terminologies. In pure mathematics there
are important applications in partition theory (\cite{Sudler1964,Wright1964}),
Padé approximation and continued fractions (see \cite{Lubinsky1998}
for a list of 13 examples), whereas in applied mathematics the function
arises in the study of strange non-chaotic attractors (SNA's), KAM
theory and string theory (see \cite{Grebogi1984,Kuznetsov1995a,Knill2011,Awata01052013}).
For example in the context of SNA's (our own area of interest), the
functions \eqref{eq:BasePn} arise in the renormalisation analysis
of skew products. Figure \ref{fig:Pnn} shows renormalised graphs
of the function for various values of $n$ over the Fibonacci interval
$[1..F_{n}]$. Note the self-similarity, and also the approximately
linear linear growth of the peaks. The common problem in each of these
areas is to understand some aspect of the growth of the sequence.

This list of application areas is no doubt incomplete if only because
of the remarkable range of representations and terminology under which
the function sequence appears. The representation $P_{n}(\omega)=\prod_{r=1}^{n}|2\sin\pi r\omega|$
arises in dynamical systems as the magnitude $|z|$ in the skew product
$(\theta,z)\longmapsto(\theta+\omega,2z\sin\pi\theta)$ (with initial
condition $(\omega,1)$). However putting $z=\exp(2i\pi\omega)$ we
obtain the representation $P_{n}(\omega)=\prod_{r=1}^{n}\left|1-z^{r}\right|$
which is the modulus of the restricted Euler function, and links us
to partition theory (amongst other things). Further if we take the
$q-$Pochhammer symbol $(a;q)_{n}$ and put $a=q=z=\exp(2i\pi\omega)$
we have $P_{n}(\omega)=\left|(z;z)_{n}\right|$ which links us to
$q-$series and String Theory. Finally we have $\log P_{n}(\omega)=\sum_{1}^{n}f(r\omega)$,
where $f(x)=\frac{1}{2}\log(2-2\cos2\pi x)$. The latter function
is the harmonic conjugate of the sawtooth function $\pi\left(\{x\}-\frac{1}{2}\right)$
(see \cite{Knill2012}). The sum $\sum_{1}^{n}f(r\omega)$ is the
Birkhoff sum of $f$, and links us to ergodic theory and KAM theory.

\section{\label{sec:Survey}A brief survey of key results }

It seems that the function $P_{n}(\omega)$ has to date been studied
relatively independently in at least several of the fields noted above,
no doubt partly as a result of the diverse terminology and representations
in play. It seems worthwhile drawing together the various strands
of study, and we provide a brief survey below.

\subsection{Initial remarks}

The situation is very simple if $\omega$ is rational. Let $\omega=p/q$
where $p/q$ is in its lowest terms, then $P_{n}(\omega)=0$ for $n\ge q$.
In addition there is an old and rather elegant result we shall make
heavy use of, namely that $\prod_{r=1}^{q-1}2\sin\pi rp/q=q$, which
becomes in our notation $P_{q-1}(p/q)=q$. We have been unable to
attribute this result, but it is crucial to our estimates for irrational
$\omega$, and so we provide a simple proof in Appendix \ref{sec:SineAppendix}. 

When $\omega$ is irrational, $P_{n}(\omega)$ is never $0$. The
contrast with the case of $\omega$ rational suggests that the number
theoretic properties of $\omega$ are important, which is indeed confirmed
by later results. In addition, if for some $\omega_{0}$ we have $\lim\sup P_{n}(\omega_{0})>0$,
then since $\lim P_{n}(\omega)=0$ for rational $\omega$ arbitrarily
close to $\omega_{0}$, the behaviour of the sequences $P_{n}(\omega)$
are highly sensitive to the value of $\omega$ around $\omega_{0}$
- a harbinger of chaotic behaviour in any Dynamical Systems incorporating
such orbits.

\subsection{Growth with $n$ of the norm $\left\Vert P_{n}(\omega)\right\Vert =\sup_{\omega}\left|P_{n}(\omega)\right|$}

The first in-depth study of the function $P_{n}(\omega)$ seems to
have been made by Sudler in 1964\cite{Sudler1964}, although Erd\foreignlanguage{english}{\H{o}}s
\& Szekeres previously stated a ``very easy'' result (without proof)
in 1959\cite{Erdos1959}%
\footnote{Their claim was that $\lim_{n\rightarrow\infty}\left\Vert P_{n}(\omega)\right\Vert ^{1/n}$
exists and lies between $1$ and $2$. Sudler found the limit precisely. %
}. Sudler%
\footnote{Freiman and Halberstam (1988) attribute this result to Wright\cite{Freiman1988}
but from a careful reading of both papers \cite{Sudler1964,Wright1964}
it seems that Sudler has priority. Sudler does however acknowledge
the help of Wright as a referee in improving the proofs, and Wright
also improves Sudler's result in his own subsequent paper.%
} showed that in the limit the norm grows exponentially with $n$,~
\footnote{More precisely he showed $\left\Vert P_{n}(\omega)\right\Vert ^{1/n}=E+O(\log n/n)$
where $E$ is the constant above%
} with the growth rate $E$ being given by the formula: 
\[
E=\lim_{n\rightarrow\infty}\left\Vert P_{n}(\omega)\right\Vert ^{1/n}=\omega_{0}^{-1}\int_{0}^{\omega_{0}}\log\left|2\sin\pi\omega\right|d\omega=1.2197...
\]
where $\omega_{0}$ is the (unique) solution in $[1/2,1]$ of $\int_{0}^{\omega_{0}}\omega\cot\pi\omega\, d\omega=0$.
Further he also showed that $\left\Vert P_{n}(\omega)\right\Vert $
is achieved at $\omega_{n}$ where $\omega_{n}\sim\omega_{0}/n\in[1/2n,1/n]$
as $n$ grows. 

Freiman and Halberstam (1988) \cite{Freiman1988} later provided an
alternate proof which gives the same result in the even more elegant
form $E=2\sin\pi\omega_{0}$ (where $\omega_{0}$ is as above). (Incidentally
this seems to be the first paper to treat the function $P_{n}(\omega)$
as a first class citizen, ie as worthy of study in its own right rather
than as as a stepping stone to the estimation of other functions). 

In 1998 Bell et al \cite{Bell1998} proved a number of stronger results,
in particular that the norm of the decimated sub-product $\left\Vert \prod_{1}^{n}2\sin r^{k}\omega\right\Vert $
grows exponentially for any $k\ge1$. More recently Jordan Bell (2013)
\cite{Bell2013} adapted the method of Wright \cite{Wright1964} to
show $\left\Vert P_{n}(\omega)\right\Vert \sim C_{1}\sqrt{n}E^{n}$,
and also generalised the result to the $L_{p}$ norm: $\left\Vert P_{n}(\omega)\right\Vert _{p}=\left(\int_{0}^{1}P_{n}(\omega)^{p}d\omega\right)^{1/p}\sim C_{1}\left(C_{2}n^{-3/2}\right)^{1/p}\sqrt{n}E^{n}$
$ $ for calculated constants $C_{1},C_{2}.\,$%
\footnote{$C_{2}$ is actually $O(p^{-1/2})$, but is independent of $n$.%
}

\subsection{Growth of peaks of the sequence $P_{n}(\omega)$ at fixed $\omega$ }

We might expect that as the norm of the function $P_{n}(\omega)$
grows exponentially, then the pointwise growth rate (the growth rate
of the sequence $P_{n}=P_{n}(\omega)$ at a fixed value of $\omega$)
would also be exponential. However this turns out not to be the case.
Using the theory of uniform distribution, Lubinsky \cite{LubinskyPade1987}
showed that for almost all $\omega$, $\lim_{n\rightarrow\infty}P_{n}^{1/n}=1$,
ie the growth is ae sub-exponential, not exponential. This apparent
conflict is explained by Figure \ref{fig:PnNorm} in which we see
that the exponential growth of the norm is achieved at a peak which
is uncharacteristic of the rest of the function. This peak narrows
and converges on 0 as $n$ grows, so that for any fixed value of $\omega$
the peak will pass it for some value of $n$, after which the growth
at that point will revert to being sub-exponential.

\begin{figure}
\noindent \begin{centering}
\includegraphics[scale=0.5]{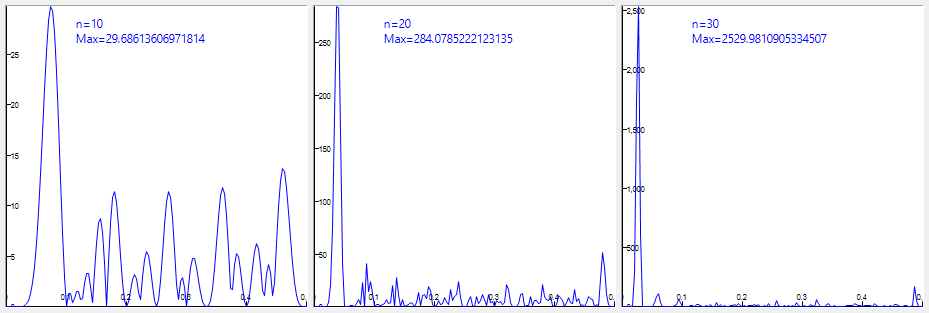}
\par\end{centering}

\protect\caption{\label{fig:PnNorm}$P_{n}(\omega)$ plotted over the interval $[0.0.5]$
for various values of $n$. Note how the norm is achieved at a point
which is converging on the origin, but that the exponential growth
achieved here is uncharacteristic.}
\end{figure}

In \cite{Knill2012} Knill \& Lesieutre adapted Herman's Denjoy-Koksma
result \cite{HermanDenjoy1979} to show for some constant $C$ that
$P_{n}(\omega)<n^{Cn^{1-1/s}\log n}$ when $\omega$ is of Diophantine
type $s\ge1$.~%
\footnote{ie There is some constant $c>0$ such that $|\omega-p/q|>c/q^{1+s}$
for any rational $p/q$ in lowest terms%
}

Lubinsky in a later paper \cite{Lubinsky1998} studied the problem
in the context of $q$-series and showed that, for almost all $\omega$,
and all $\epsilon>0$, there are constants $N,C$ (dependent on $\omega,\epsilon$)
such that for all $n>N$ we have $P_{n}(\omega)\le n^{C(\log n)^{\epsilon}}$,
and further, when $\omega$ is of Diophantine type 1, that $P_{n}(\omega)\le n^{C}$
. Amongst other results he also showed that, for all irrational $\omega$,
$\limsup_{n}P_{n}(\omega)\ge n$ (from which we deduce $C\ge1$ above),
and conjectured that for all $\omega$, $\liminf P_{n}(\omega)=0$.
He established that the latter result certainly holds for $\omega$
with unbounded partial quotients.

\subsection{Growth of the sequence $P_{n}(\omega)$ when $\omega$ is the golden
rotation $(\sqrt{5}-1)/2$}

The results of the previous section bound the peak growth of $P_{n}(\omega)$.
However the peaks of this function are very different from its values
elsewhere (see Figure \ref{fig:Pnn}). Certain applications, in particular
in the study of strange non-chaotic attractors (SNA's), require a
sharper estimate of the size of $P_{n}(\omega)$ at every point $n$,
and not just at the peaks. 

Working in the context of KAM theory, Knill and Tangerman studied
the Birkhoff sum representation $S_{n}(\omega)=\sum_{r=1}^{n}\log(2-2\cos2\pi r\omega)$
in their 2011 paper \cite{Knill2011}. It is easy to show that  $S_{n}(\omega)=2\log P_{n}(\omega)$.
As is often the case with this type of problem, they chose to study
the ``simplest'' irrational number, the golden mean (or more accurately,
its fractional part $\omega=(\sqrt{5}-1)/2$) which is of Diophantine
type 1 and has rational convergents $F_{n-1}/F_{n}$ where $F_{n}$
is the $n$th Fibonacci number (indexed from $F_{0}=1$). Taking the
sequence $(F_{n})$ as a renormalisation scale, they presented experimental
graphical and numerical evidence for the existence of an asymptotic
renormalisation function. The renormalisation approach was also earlier
studied by Kuznetsov et al (1995) \cite{Kuznetsov1995a} in a slightly
more general setting, where they used polynomial approximation to
obtain strong numerical evidence also for asymptotic renormalisation
functions.

Assuming the existence of this asymptotic function as a hypothesis,
Knill and Tangerman deduced the following consequences:
\begin{thm}
Consequences of the hypothesis\end{thm}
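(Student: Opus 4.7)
The plan is to leverage the hypothesised convergence of the Fibonacci subsequence $Q_{n}=P_{F_{n}}(\omega)$ to build global bounds on $P_{n}(\omega)$ from subsequential information at Fibonacci indices. The key combinatorial tool is the Zeckendorf representation, which writes any $n\in\mathbb{N}$ uniquely as $n=F_{k_{1}}+F_{k_{2}}+\cdots+F_{k_{m}}$ with $k_{1}>k_{2}>\cdots>k_{m}$ and $k_{i}-k_{i+1}\geq2$; in particular $m=O(\log n)$ and $k_{1}\leq\log_{\varphi}n+O(1)$.

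First I would establish a multiplicative decomposition along this representation. Writing $N_{j}=F_{k_{1}}+\cdots+F_{k_{j}}$, we have $P_{n}(\omega)=\prod_{j=1}^{m}P_{N_{j}}(\omega)/P_{N_{j-1}}(\omega)$, and the $j$-th ratio equals $\prod_{r=1}^{F_{k_{j}}}|2\sin\pi(N_{j-1}+r)\omega|$. The aim is to compare this with $Q_{k_{j}}$. Since $N_{j-1}$ is an integer combination of Fibonacci numbers, and since for the golden mean $F_{k}\omega-F_{k-1}=(-1)^{k}\omega^{k+1}$, the quantity $\{N_{j-1}\omega\}$ reduces (modulo $1$) to a telescoping sum of the small errors $\omega^{k_{i}+1}$; call this total error $\epsilon_{j}$, which is bounded by a geometric series and thus satisfies $|\epsilon_{j}|\lesssim\omega^{k_{j}+1}$.

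Next I would quantify the distortion introduced by the shift. Using $|2\sin\pi(r\omega+\epsilon_{j})|=|2\sin\pi r\omega|\bigl(1+O(\epsilon_{j}/\sin\pi r\omega)\bigr)$, the $j$-th ratio becomes $Q_{k_{j}}$ multiplied by a product of distortions $\prod_{r=1}^{F_{k_{j}}}\bigl(1+O(\epsilon_{j}/\sin\pi r\omega)\bigr)$. The denominators $\sin\pi r\omega$ are controlled by the three-distance theorem applied to the golden rotation: among $\{r\omega\}$ for $r=1,\ldots,F_{k_{j}}$ there are at most three distinct gap lengths, all of order $\omega^{k_{j}}$, and the closest approach to $0$ is of the same order. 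Combined with $|\epsilon_{j}|\lesssim\omega^{k_{j}+1}$, each distortion factor is $1+O(1)$ in the worst case, and after taking logarithms the accumulated error across $F_{k_{j}}$ terms is bounded. Summing over $j$ and invoking the hypothesis $\log Q_{k_{j}}\to\log C$ yields $\log P_{n}(\omega)=\sum_{j=1}^{m}\log Q_{k_{j}}+O(m)=O(\log n)$, which is the desired power-law bound $P_{n}(\omega)\leq n^{O(1)}$. The matching lower bound follows by the same argument applied to $1/P_{n}(\omega)$, once the distortion factors are shown to be bounded away from $0$.

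The principal obstacle will be this third step: controlling the accumulated multiplicative distortions when $r\omega$ is close to an integer, where $1/\sin\pi r\omega$ can be large. The three-distance theorem gives the correct order of magnitude for the worst gap, but a uniform bound on the product requires a careful term-by-term summation of $\sum_{r=1}^{F_{k_{j}}}\epsilon_{j}/|\sin\pi r\omega|$, pairing each near-integer $r\omega$ with the compensating smallness of $\epsilon_{j}$. This is precisely where the special arithmetic of the golden rotation, in particular the self-similarity of Fibonacci orbits and the exact geometric recursion $\omega^{k+1}=\omega^{k-1}-\omega^{k}$, must be exploited, and it is also where the hypothesis that $Q_{n}\to C$ (rather than merely $Q_{n}$ being bounded) is likely to be essential in pinning down the constant in the power law.
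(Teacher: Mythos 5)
Your global strategy is the same as the paper's: Zeckendorf decomposition, splitting $P_{n}(\omega)$ into blocks $\prod_{r=1}^{F_{k_{j}}}|2\sin\pi(r\omega+\epsilon_{j})|$ with $|\epsilon_{j}|\lesssim\omega^{k_{j}+1}$, and using $m=O(\log n)$ blocks so that uniform bounds per block give the power law. The genuine gap is in the claim that ``the accumulated error across $F_{k_{j}}$ terms is bounded''. If the distortion is estimated term by term in absolute value, as you propose, you are in effect bounding $\epsilon_{j}\sum_{r=1}^{F_{k_{j}}}|\cot\pi r\omega|$ (equivalently $\epsilon_{j}\sum_{r}1/\sin\pi d_{r}$ with $d_{r}$ the distance from $r\omega$ to the nearest integer). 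By the three-distance structure of the golden rotation the values $d_{r}$, $r\le F_{k_{j}}$, are essentially $j'/F_{k_{j}}$ for $j'=1,2,\dots$, so this sum is of order $F_{k_{j}}\log F_{k_{j}}\sim\omega^{-k_{j}}k_{j}$; multiplying by $\epsilon_{j}\sim\omega^{k_{j}+1}$ leaves a per-block error of order $k_{j}$, not $O(1)$. Summing over the blocks then yields only $\log P_{n}(\omega)=O\bigl((\log n)^{2}\bigr)$, i.e.\ $P_{n}(\omega)\le n^{C\log n}$, a quasi-power law rather than the asserted $n^{C}$. So the step ``after taking logarithms the accumulated error \ldots{} is bounded'' does not follow from the absolute-value pairing you describe, and the argument as written does not prove consequence (3).

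What is needed, and what the paper supplies, is cancellation in the \emph{signed} first-order term: the block distortion is governed by $\sin\pi\alpha\sum_{r=1}^{F_{k}}\cot\pi r\omega$, and the paper proves that $\omega^{k}\sum_{r=1}^{F_{k}}\cot\pi r\omega$ is bounded (inequalities \eqref{eq:SumCotA}, \eqref{eq:SumCotB}) by comparing each $\cot\pi r\omega$ with $\cot\pi s/F_{k}$ for $s=[rF_{k-1}]$ and invoking the exact identity $\sum_{s=1}^{F_{k}-1}\cot(\pi s/F_{k})=0$, treating the one singular residue separately; the absolute sum is larger by exactly the $\log F_{k}$ factor that ruins your estimate. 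The lower bound is not ``the same argument applied to $1/P_{n}$'': one needs $\log(1+x)\ge x-x^{2}$ on a suitable range, a check that the argument stays above $-0.683$, and a bound $\sum_{r=1}^{F_{k}}\cot^{2}\pi r\omega=O(F_{k}^{2})$ up to the two singular terms. Two smaller points: convergence $Q_{n}\to c$ is not what makes the power law work --- positive upper and lower bounds on the blocks suffice, and convergence only pins down $K_{1}\le0\le1\le K_{2}$; and your proposal omits the easy but necessary corollary $P_{F_{n}-1}(\omega)/F_{n}\to c\sqrt{5}/2\pi$ (divide $P_{F_{n}}$ by $2\sin\pi\omega^{n}$), which is what produces the accumulation point $2$ in consequence (2).
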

\begin{enumerate}
\item The log average of the Birkhoff sum tends to a constant along the
renormalisation subsequence, ie $S_{F_{n}}(\omega)/\log F_{n}\longrightarrow c$
for some constant $c$.
\item The sequence $S_{n}(\omega)/\log n$ has accumulation points at $0$
and $2$. 
\item The sequence $S_{n}(\omega)/\log n$ is bounded. %
\footnote{Lubinsky \cite{Lubinsky1998} proved this result for almost all $\omega$
(in the $q-$Pochhammer form $\log|(q;q)_{n}|=O(\log n)$), but not
necessarily for the specific value of the golden mean %
} 
\end{enumerate}
In this paper we will give a rigorous proof of the (slightly stronger)
analogous results which are set out below in terms of $P_{n}(\omega)$.
But $S_{n}(\omega)=\sum_{r=1}^{n}\log(4\sin^{2}2\pi r\omega)=2\log P_{n}(\omega)$
and from this it is easily seen that the results below imply the results
above. (Note that (2) above is the result of combining (1),(2) below).
\begin{thm}
\label{prop:K=000026T}The following results hold:\end{thm}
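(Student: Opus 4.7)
The plan follows the strategy signposted in the abstract: first establish convergence of the Fibonacci decimation $Q_{n}:=P_{F_{n}}(\omega)$ to a positive constant, then deduce every pointwise estimate on $P_{n}(\omega)$ from this single renormalisation statement. The crucial elementary input is the appendix identity $P_{q-1}(p/q)=q$ applied at each golden convergent: $P_{F_{n}-1}(F_{n-1}/F_{n})=F_{n}$, and because $\gcd(F_{n-1},F_{n})=1$ the residues $rF_{n-1}\bmod F_{n}$ permute $1,\ldots,F_{n}-1$, so that this rational product is simply $\prod_{k=1}^{F_{n}-1}2\sin(\pi k/F_{n})$.

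To pass from the rational convergent to $\omega$ itself I would exploit the exact golden-mean identity $F_{n}\omega-F_{n-1}=(-1)^{n-1}\omega^{n}$, which yields $|r\omega-rF_{n-1}/F_{n}|\le\omega^{n}$ uniformly for $r\le F_{n}$. Writing
\[
Q_{n}=|2\sin\pi F_{n}\omega|\cdot P_{F_{n}-1}(\omega)=|2\sin\pi\omega^{n}|\cdot F_{n}\cdot R_{n},\qquad R_{n}:=\prod_{r=1}^{F_{n}-1}\frac{|\sin\pi r\omega|}{|\sin\pi rF_{n-1}/F_{n}|},
\]
the task reduces to showing $R_{n}\to R_{\infty}>0$. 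A logarithmic expansion of each factor produces a linear term $\varepsilon_{r}\cot(\pi rF_{n-1}/F_{n})$ with $\varepsilon_{r}=O(r\omega^{n}/F_{n})$ and a controlled quadratic remainder. The three-distance theorem at $F_{n-1}/F_{n}$ forces the denominators $|\sin(\pi k/F_{n})|$ to be at least of order $k/F_{n}$; pairing $k\leftrightarrow F_{n}-k$ kills the dominant odd contribution of the cotangent, and what remains decays geometrically in $n$. Combining with $F_{n}\omega^{n}\to 1/\sqrt{5}$ and $|2\sin\pi\omega^{n}|\sim 2\pi\omega^{n}$ yields $Q_{n}\to (2\pi/\sqrt{5})R_{\infty}$, a finite positive constant.

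Once $Q_{n}\to Q_{\infty}>0$ is in hand, the remaining conclusions are essentially bookkeeping. For arbitrary $n$ I would use the Zeckendorf decomposition $n=F_{n_{1}}+\cdots+F_{n_{k}}$ with $n_{1}>n_{2}+1>\cdots$ and $k=O(\log n)$, split $P_{n}(\omega)$ into $k$ consecutive blocks of lengths $F_{n_{j}}$, and handle each shifted block $\prod_{r=1}^{F_{n_{j}}}|2\sin\pi(m_{j}+r)\omega|$ by the same comparison with rationals (the shift $m_{j}\omega$ contributes only a bounded factor since its Ostrowski digits keep it away from small-denominator traps). Each block is thereby sandwiched between constant multiples of $F_{n_{j}}$, and taking the product across $j$ delivers a two-sided power-law bound $n^{-A}\le P_{n}(\omega)\le n^{B}$, which is the boundedness of $\log P_{n}/\log n$. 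The accumulation points follow from two subsequences: along $n=F_{n}$ one has $\log Q_{n}/\log F_{n}\to 0$ by boundedness, while along $n=F_{n}-1$ one has $P_{F_{n}-1}(\omega)=Q_{n}/|2\sin\pi F_{n}\omega|\sim(\text{const})\cdot\omega^{-n}\sim(\text{const})\cdot F_{n}$, so $\log P/\log n\to 1$.

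The principal obstacle is the control of $R_{n}$. At the unique residue $r^{\ast}$ with $r^{\ast}F_{n-1}\equiv\pm 1\pmod{F_{n}}$ the denominator is only $\sin(\pi/F_{n})$, so naively the corresponding factor already contributes $O(1)$ to $\log R_{n}$; the cancellation between $k$ and $F_{n}-k$ must be genuine, not merely convenient. The cleanest route, I expect, is to bypass the direct estimate and instead derive a self-similar two-step recursion $Q_{n+1}=g_{n}(Q_{n},Q_{n-1})$ by splitting the orbit $\{r\omega\}_{r=1}^{F_{n+1}}$ into the sub-block $\{r\omega\}_{r=1}^{F_{n}}$ together with its translate $\{r\omega+(-1)^{n-1}\omega^{n}\}_{r=1}^{F_{n-1}}$, exploit the golden mean's fixed-point status under the Gauss map to show that the maps $g_{n}$ converge, and then read off convergence of $Q_{n}$ from the resulting contractive dynamics.
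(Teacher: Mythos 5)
Your overall architecture matches the paper's (Fibonacci decimation for the main limit, then Zeckendorf blocks for the power law), but the two places where you wave your hands are exactly the places where essentially all of the work lies. First, the convergence of $R_{n}$: after writing $\log R_{n}$ as a sum of terms with linear part $\pi\varepsilon_{r}\cot(\pi t/F_{n})$, $t=[rF_{n-1}]$, pairing $t\leftrightarrow F_{n}-t$ does \emph{not} leave a remainder that ``decays geometrically in $n$''. The paired linear term is $\pi\cot(\pi t/F_{n})\cdot(-(-\omega)^{n}/F_{n})(2r(t)-F_{n})$, which is of size $O(1/t)$ (since $\cot(\pi t/F_{n})\sim F_{n}/\pi t$ and $F_{n}\omega^{n}\to 1/\sqrt{5}$), so its absolute sum over $t\le F_{n}/2$ grows like $\log F_{n}\sim n$. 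Convergence holds only because the factor $(2r(t)-F_{n})/F_{n}=2\{(-1)^{n}F_{n-1}t/F_{n}\}-1$ oscillates about zero, and extracting that cancellation is the hard conditional-convergence problem: the paper handles it by first re-basing the perturbation so the deltas become $\omega^{n}(\{tF_{n-1}/F_{n}\}-\tfrac12)$ with zero mean, then summing by parts against the cotangent differences and bounding the partial sums $\sum_{s\le t}(\{\theta+s\omega\}-\tfrac12)$ by $O(\log t)$ via a lemma on convergents (the $3/2$ bound) combined with the Zeckendorf representation of $t$. Your fallback of a contractive recursion $Q_{n+1}=g_{n}(Q_{n},Q_{n-1})$ hides the same difficulty: the translated block $\prod_{r=1}^{F_{n-1}}|2\sin\pi(r\omega+F_{n}\omega)|$ is a \emph{phase-shifted} Sudler product, not a function of $Q_{n-1}$, and comparing it to $Q_{n-1}$ requires precisely the cotangent-sum estimates you have not supplied.

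Second, for the power-law bound you assert that each shifted block $\prod_{r=1}^{F_{n_{j}}}|2\sin\pi(r\omega+m_{j}\omega)|$ ``contributes only a bounded factor since its Ostrowski digits keep it away from small-denominator traps''. This is exactly the assumption that Knill and Tangerman left unproved and that the paper isolates (its Conjecture \ref{conj:constantsExist}) and then proves as Lemma \ref{lem:BoundedIC}: for $|\alpha|\le\omega^{n+1}$ one must show $\prod_{r=1}^{F_{n}}|2\sin\pi(r\omega+\alpha)|$ is bounded above \emph{and below} by positive constants, and this forces two-sided bounds $\bigl|\sum_{r=1}^{F_{n}}\cot\pi r\omega\bigr|=O(\omega^{-n})$ together with $\sum_{r=1}^{F_{n}}\cot^{2}\pi r\omega=O(\omega^{-2n})$, obtained by comparing $\{r\omega\}$ with the lattice $s/F_{n}$ and treating the near-singular residues $r=F_{n},F_{n-1}$ separately. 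Your items (2) (the asymptotics of $P_{F_{n}-1}/F_{n}$) and the identification of the accumulation points are fine and coincide with the paper's easy corollaries, but as it stands the proposal has genuine gaps at both the main limit and the block-boundedness step.
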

\begin{enumerate}
\item For some constant $c$, $P_{F_{n}}(\omega)\longrightarrow c$.
\item For the same constant $c$, $P_{F_{n}-1}(\omega)/F_{n}\longrightarrow$$c/2\pi\sqrt{5}$.
\item There are real constants $C_{1}\le0<1\le C_{2}$ such that $n^{C_{1}}\le P_{n}(\omega)\le n^{C_{2}}$.
\end{enumerate}
The proof of the first foundational result, namely that $P_{F_{n}}(\omega)\longrightarrow c$
for some constant $c$, will occupy the bulk of the paper. In section
\ref{sec:Consequences} we will deduce results (2) and (3).

\section{\label{sec:Overview}Statement of main result \& Overview of Proof}

At the end of the previous section we described how Knill \& Tangerman
recently presented experimental graphical and numerical evidence for
the existence of an asymptotic renormalisation function when $\omega=\left(\sqrt{5}-1\right)/2$.
From this they deduced three consequences. However we will show in
Section \ref{sec:Consequences} that the second and third consequences
flow directly from the first, and have no dependency on the experimental
function. Our main contribution in this paper is to establish the
first consequence rigorously without reference to the experimental
function, that is, we will prove that the sequence $P_{F_{n}}(\omega)$
converges to a constant.

This rather simple statement belies the surprising amount of work
which seems necessary to prove it. However it is worth noting that
both Knill and Lubinsky remark that this is one problem area where
established procedures and powerful tools fall short. This has also
been our own experience, and we have felt very much forced back to
a proof from first principles. 

 Following renormalisation terminology, we ``decimate'' the sequence
$P_{n}=\left|\prod_{r=1}^{n}2\sin\pi r\omega\right|$ by picking every
$F_{n}$th element to yield a ``renormalisation sub-sequence'' $Q_{n}=\left|\prod_{r=1}^{F_{n}}2\sin\pi r\omega\right|$.
Our main result is now the following:
\begin{thm}
The sequence $Q_{n}=\left|\prod_{r=1}^{F_{n}}2\sin\pi r\omega\right|$
is convergent to a constant $c=2.407....$
\end{thm}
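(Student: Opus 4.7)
My plan is to exploit the Fibonacci splitting $F_{n+1} = F_n + F_{n-1}$ together with the Diophantine identity $|F_n\omega - F_{n-1}| = \omega^{n+2}$, writing $\omega_n := F_n\omega - F_{n-1}$. Starting from
\[
Q_{n+1} = Q_n \cdot \prod_{s=1}^{F_{n-1}}|2\sin \pi(F_n+s)\omega|,
\]
the identity $F_n\omega = F_{n-1} + \omega_n$ and the fact that shifting by the integer $F_{n-1}$ only changes signs that are absorbed by the absolute value collapse the right-hand factor to $\prod_{s=1}^{F_{n-1}}|2\sin \pi(s\omega + \omega_n)|$. Thus the new block appended when passing from $Q_n$ to $Q_{n+1}$ is simply the block defining $Q_{n-1}$, translated by the tiny amount $\omega_n$, which gives
\[
\frac{Q_{n+1}}{Q_n} = Q_{n-1}\cdot R_n, \qquad R_n = \prod_{s=1}^{F_{n-1}}\left|\frac{\sin \pi(s\omega+\omega_n)}{\sin \pi s\omega}\right|.
\]

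Most factors of $R_n$ are extremely close to $1$. The three-distance theorem identifies the exceptions as the Fibonacci positions $s = F_k$ for $0 \le k \le n-1$: at these, $s\omega$ comes closest to an integer and the ratio reduces to the explicit form $\sin\pi(\omega_k+\omega_n)/\sin\pi\omega_k$, which for $k < n$ expands as $1 + (-1)^{n-k}\omega^{n-k} + O(\omega^{2(n-k)})$. Collecting the Fibonacci contributions yields, in the limit, the infinite product $\prod_{j=1}^\infty(1+(-1)^j\omega^j)$, while the residual non-Fibonacci indices $s \le F_{n-1}$ can be organised by their Ostrowski (Zeckendorf) expansion in the Fibonacci base so that their contribution is seen to converge to a further finite constant. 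Together this produces a candidate limit $R_n \to R_\infty$ and hence, formally, $Q_n \to c = 1/R_\infty$.

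The central difficulty is then to upgrade this formal balance into genuine convergence. Taking logarithms, the relation $Q_{n+1} = Q_n Q_{n-1} R_n$ becomes the scalar second-order recurrence $q_{n+1} = q_n + q_{n-1} + \log R_n$ for $q_n = \log Q_n$, whose linear part has the Fibonacci spectrum $\{\phi, -\omega\}$. Because $\phi > 1$, a generic $o(1)$ perturbation $\log R_n - \log R_\infty$ would be geometrically amplified at rate $\phi^n$ and destroy convergence; so the proof must actually show that this deviation is summable with an $\omega^n$ prefactor, i.e.\ that it lies in the contracting eigendirection of the Fibonacci shift. I expect this to require sharp, non-asymptotic estimates in which the errors from Fibonacci-positions and non-Fibonacci-positions \emph{cancel} rather than merely both being small, exploiting exact congruences (such as the Cassini-type identity $F_{n-1}^2 \equiv (-1)^n \pmod{F_n}$) and the precise arithmetic of Fibonacci residues specific to the golden rotation. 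Producing this cancellation is the main analytic obstacle, and it is where I expect the bulk of the technical work to go.
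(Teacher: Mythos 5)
There is a genuine gap, and you have in fact named it yourself without closing it. Your scheme reduces the theorem to the recursion $Q_{n+1}=Q_nQ_{n-1}R_n$, i.e.\ $q_{n+1}=q_n+q_{n-1}+\log R_n$ with $q_n=\log Q_n$, and then hopes to deduce $q_n\to\log c$ from $\log R_n\to\log R_\infty$. But because the homogeneous part has the expanding eigenvalue $\phi>1$, no rate of convergence of $\log R_n$ alone can force convergence of $q_n$: the general solution contains a free $A\phi^{n}$ mode, and showing $A=0$ (equivalently, that the orbit lies on the stable manifold, or that $Q_n$ admits a priori two-sided bounds uniform in $n$) is not a refinement of the estimate on $R_n$ --- it is essentially the theorem itself, restated. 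Your closing paragraph asks for exactly this cancellation but offers no mechanism to produce it, so the argument as proposed is circular at its core. Moreover, even the preliminary claim that $R_n$ converges is not routine: writing each factor as $1+O(\omega_n\cot\pi s\omega)$ and summing $|\cot\pi s\omega|$ over $s\le F_{n-1}$ gives a bound of order $\omega^{n}F_{n-1}\log F_{n-1}\asymp n$, not $O(1)$, so convergence of $\log R_n$ already requires controlling signed cancellations in cotangent-type sums (the kind of summation-by-parts and partial-sum estimates that occupy the $B_n$ analysis in the paper), which your sketch waves at via ``organised by their Ostrowski expansion'' without detail. (A small slip besides: with the standard convergent identity one has $|F_n\omega-F_{n-1}|=\omega^{n}$, not $\omega^{n+2}$.)

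For contrast, the paper avoids the unstable recursion entirely by working with each $Q_n$ directly: substituting $\omega=F_{n-1}/F_n-(-\omega)^n/F_n$ and using the exact identity $\prod_{t=1}^{F_n-1}2\sin(\pi t/F_n)=F_n$, it factorises $Q_n=A_nB_nC_n$ with $A_n=2F_n\sin\pi\omega^n\to 2\pi/\sqrt5$, $C_n$ an absolutely convergent square-type product, and $B_n$ the conditionally convergent perturbation product, whose convergence is obtained via a three-distance/Zeckendorf estimate on the partial sums $S_{nt}$ together with summation by parts. That closed-form decomposition is what replaces the missing stable-manifold argument in your proposal; if you want to salvage the recursive route you would first need an independent proof that $Q_n$ is bounded above and below, at which point you would already be most of the way to the result by the direct method.
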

The proof of this theorem will occupy the main body of the paper (sections
\ref{sec:Overview}--\ref{sec:EndMain}). 

In Section \ref{sec:Consequences} we deduce from the main result
two other results reported by Knill \& Tangerman. In particular this
includes the result that the Sudler product growth at $\omega$ is
bounded by a power law. Knill \& Tangerman suggested that this particular
result would flow from a modification of the proof of the Denjoy-Koksma
result in ergodic theory, but on closer examination further work appears
necessary. We have again found the need to derive this corollary from
first principles.

\subsection{\label{sub:Overview}Overview of the proof of the main result (sections
\ref{sec:Preliminaries}--\ref{sec:EndMain}):}

In section \ref{sec:Preliminaries}, together with some other preliminaries,
we separate out the proofs of a number of ancillary results from the
overall flow, in an attempt to make clearer the main lines of reasoning
in the other sections.

In section \ref{sec:MainStart} we introduce a core strategy which
is to exploit the continued fraction convergents to the inverse golden
mean $\omega$. These convergents are the ratios of subsequent Fibonacci
numbers $F_{n-1}/F_{n}$, and $\omega=\left(F_{n-1}-(-\omega)^{n}\right)/F_{n}$
(see \eqref{eq:wEstimate}). This gives us:
\[
Q_{n}=\left|\prod_{r=1}^{F_{n}}2\sin\pi r\omega\right|=\left|\prod_{r=1}^{F_{n}}2\sin\pi r\left(F_{n-1}-(-\omega)^{n}\right)/F_{n}\right|
\]

This allows us to develop a representation of $Q_{n}$ as a product
of three rather more tractable products, namely: 
\[
Q_{n}=A_{n}\, B_{n}\, C_{n}=\left(2F_{n}\sin\pi\omega^{n}\right)\left(\prod_{t=1}^{F_{n}-1}\frac{s_{nt}}{2\sin\pi\frac{t}{F_{n}}}\right)\,\prod_{t=1}^{F_{n}/2}\left(1-\frac{s_{n0}^{2}}{s_{nt}^{2}}\right)
\]
where $s_{nt}=2\sin\pi\left(t/F_{n}-\omega^{n}\left([F_{n-1}t]/F_{n}-1/2\right)\right)$.

It is easy to show that $A_{n}\rightarrow2\pi/\sqrt{5}$, and in fact
the products $B_{n},C_{n}$ also converge to strictly positive limits.
However the latter demonstrations require significantly greater effort,
and receive their own sections.

In section \ref{sec:Cn} we shall deal with the convergence of the
simpler of the two products, namely $C_{n}=\prod_{t=1}^{F_{n}/2}\left(1-\frac{s_{n0}^{2}}{s_{nt}^{2}}\right)$.
In section \ref{sec:EndMain} we shall deal with the convergence of
$B_{n}=\left(\prod_{t=1}^{F_{n}-1}\frac{s_{nt}}{2\sin\pi\frac{t}{F_{n}}}\right)$.
This requires the most work and is broken down into several significant
sub-sections.

\section{\label{sec:Preliminaries}Preliminaries}

\subsection{Notation}

We will make use of both modulo arithmetic and floor functions. Since
the box notation $[.]$ is often used for both purposes, in this paper
we will use the following conventions:
\begin{itemize}
\item For a given positive modulus $q\ge1$, we use $[r]$ to represent
the residue of $r\bmod q$ in the residue set $\{0,\ldots,\, q-1\}$,
for example $[-1]=q-1$.
\item We use $\left\lfloor x\right\rfloor $ to represent the floor of $x$,
ie the largest integer less than or equal to $x$, for example $\left\lfloor -0.5\right\rfloor =-1$.
\end{itemize}
We also make extensive use of the following ``almost standard''
notation, which we define here precisely in order to eliminate any
ambiguity over edge cases:
\begin{itemize}
\item The fractional part function $\left\{ x\right\} $ maps $x$ to $x-\left\lfloor x\right\rfloor \in[0,1)$,
for example $\{-1.25\}=0.75$
\item $f(x)=O(g(x))$ as $x\rightarrow C\in[-\infty,+\infty]$ means that
there is a positive real constant $M$ and a neighbourhood $N(C)$
such that $|f(x)|<M|g(x)|$ for $x\in N(C)$, for example $1/(c-x)=O(1/x)$
as $x\rightarrow+\infty$. Normally $C$ is $0$ or $+\infty$, and
will be omitted if clear from the context. 
\end{itemize}

\subsubsection{\label{sub:GenSummation}Generalised notation for sums and products }

As usual we will define the empty sum to have the value 0, and the
empty product to have the value $1$. 

Given a summable sequence $(a_{r})$ we will find it useful to define
a generalised summation notation $\sum_{r=x}^{y}a_{r}$ to include
real (rather than integer) upper and lower bounds $x,y$. We do this
by defining the step function $f(t)=a_{r}$for $t\in[r,r+1)$, and
then $\sum_{r=x}^{y}a_{r}=\int_{x}^{y}f(t)\mathrm{dt}$. If $f(t)>0$
on $[x,y)$ we define the multiplicative analogue as $\prod_{r=x}^{y}a_{r}=\exp\int_{x}^{y}\log f(t)\mathrm{dt}$.
For example for odd integers $n=2k+1$: 
\[
\sum_{1}^{n/2}a_{r}=\sum_{1}^{k}a_{r}+\frac{1}{2}a_{k+1}
\]
Note that for integer $x,y$ the definitions coincide with normal
summation and product notation.

\subsection{Special sequences used in this paper}

In addition to the Fibonacci sequence $\left(F_{i}\right)=(0,1,1,2,3,5,8...)$,
we make extensive use of a number of derived sequences which we define
here for convenience. Note we only define them for integer $n,t$
and $n\ge1$.

\begin{eqnarray}
s_{nt} & = & 2\sin\pi\left(\frac{t}{F_{n}}-\omega^{n}\left(\frac{\left[tF_{n-1}\right]}{F_{n}}-1/2\right)\right)\label{eq:snt}\\
\xi_{nt} & = & \begin{cases}
\frac{\left[tF_{n-1}\right]}{F_{n}}-\frac{1}{2} & \left([t]\ne0\bmod F_{n}\right)\\
0 & \left([t]=0\bmod F_{n}\right)
\end{cases}\\
\xi_{\infty t} & = & \{t\omega\}-\frac{1}{2}\\
h_{nt} & = & \cot\frac{\pi t}{F_{n}}\sin\pi\omega^{n}\xi_{nt}\;([t]\ne0\bmod F_{n})
\end{eqnarray}

Note that $s_{nt}=2\sin\pi\left(t/F_{n}-\omega^{n}\xi_{nt}\right)$
when $[t]\ne0\bmod F_{n}$, but not when $[t]=0\bmod F_{n}$ due to
the alternative definition of $\xi_{nt}$. This reflects the fact
that the two sequences play very different roles, and each definition
makes sense in its own context. We have also chosen to leave $h_{nt}$
undefined for $[t]=0\bmod F_{n}$.
\begin{lem}
\label{lem:baseSeq}For the sequences $s_{nt},\xi_{nt},\xi_{\infty t}$
defined above:
\begin{enumerate}
\item For fixed $n\ge1$, the sequences $\left|s_{nt}\right|,\xi_{nt},h_{nt}$
are periodic sequences of period $F_{n}$, and further $s_{nt},\xi_{nt}$
are both odd sequences in $t$ (ie of the form $a_{t}=-a_{-t}$) and
$h_{nt}$ is an even sequence in $t$ (ie of the form $a_{t}=a_{-t})$.
\item Both $\left|\xi_{nt}\right|<1/2$ and $\left|\xi_{\infty t}\right|<1/2$
with the exception of $\xi_{\omega0}=-1/2$. 
\item In the range $0\le t\le F_{n}-1$, $s_{nt}\ge s_{n0}>0$ with equality
only at $t=0$. For any $t$, $s_{n,F_{n}+t}=-s_{nt}$
\item $\xi_{n,F_{n}-t}=-\xi_{nt}$ whereas $s_{n,F_{n}-t}=s_{nt}$, and
$h_{n,F_{n}-t}=h_{nt}$
\item For $1\le t\le F_{n-1}$ we have $\xi_{nt}=\xi_{\infty t}+O(\omega^{n})$
and $\lim_{n\rightarrow\infty}\xi_{nt}=\xi_{\infty t}$
\end{enumerate}
\end{lem}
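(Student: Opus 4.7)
The plan is to reduce everything to two foundational facts: (i) $\gcd(F_{n-1},F_n)=1$, so that $t\mapsto[tF_{n-1}]$ permutes the residues $\{0,1,\ldots,F_n-1\}$, and (ii) the exact identity $\omega F_n+(-\omega)^n=F_{n-1}$ already highlighted in the overview. All five parts reduce to elementary manipulations once these are in hand; I will dispatch them in the order (1), (4), (2), (5), (3).

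Parts (1) and (4) are sign and symmetry bookkeeping. Periodicity of $\xi_{nt}$ is immediate from its residue-based definition; periodicity of $h_{nt}$ uses $\cot(\theta+\pi)=\cot\theta$; and $s_{n,t+F_n}=-s_{nt}$ because shifting $t$ by $F_n$ adds $\pi$ to the sine argument, making $|s_{nt}|$ genuinely $F_n$-periodic. For the parities, $[-tF_{n-1}]=F_n-[tF_{n-1}]$ whenever $[tF_{n-1}]\ne0$ yields $\xi_{n,-t}=-\xi_{nt}$ at once, and the identities $\sin(\pi-x)=\sin x$, $\cot(\pi-x)=-\cot x$ then give the stated parities of $s_{nt}$ and $h_{nt}$. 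Part (4) is the same calculation applied under $t\mapsto F_n-t$.

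Part (2) is immediate: coprimality places $[tF_{n-1}]$ in $\{1,\ldots,F_n-1\}$ whenever $[t]\ne0$, so $\xi_{nt}\in(-1/2,1/2)$; for $\xi_{\infty t}=\{t\omega\}-1/2$ the irrationality of $\omega$ forces $\{t\omega\}\in(0,1)$ for $t\ne0$, while $\xi_{\infty 0}=-1/2$. For part (5), identity (ii) gives $tF_{n-1}/F_n=t\omega+t(-\omega)^n/F_n$; in the range $1\le t\le F_{n-1}$ the perturbation has magnitude at most $F_{n-1}\omega^n/F_n\le\omega^n$, whereas the best-approximation property of the Fibonacci convergents bounds $\|t\omega\|$ below by $\|F_{n-2}\omega\|$, which is of order $\omega^{n-1}\gg\omega^n$. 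The perturbation therefore never crosses an integer, the floor is preserved, and $\xi_{nt}-\xi_{\infty t}=t(-\omega)^n/F_n=O(\omega^n)$.

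The delicate step is part (3). Writing $\theta_{nt}:=t/F_n-\omega^n\xi_{nt}$ so that $s_{nt}=2\sin\pi\theta_{nt}$, a direct computation gives
\[
\theta_{nt}-\omega^n/2=\bigl(t-\omega^n[tF_{n-1}]\bigr)/F_n.
\]
Since $[tF_{n-1}]\le F_n-1$ and $\omega^nF_n<1$ (a standard Binet-formula estimate), we have $\omega^n[tF_{n-1}]<1\le t$ for every integer $t\ge1$, hence $\theta_{nt}>\omega^n/2$; equality would force the integer $[tF_{n-1}]$ to equal $t/\omega^n$, contradicting the irrationality of $\omega^n$. Combined with the reflection $\theta_{n,F_n-t}=1-\theta_{nt}$ from part (4), this yields $\theta_{nt}\in(\omega^n/2,1-\omega^n/2)$ for $1\le t\le F_n-1$. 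Since $\sin(\pi x)$ is symmetric about $x=1/2$ and increasing on $[0,1/2]$, we conclude $s_{nt}>2\sin(\pi\omega^n/2)=s_{n0}>0$. The main obstacle throughout is keeping straight the two competing definitions of $\xi_{nt}$ at $[t]\equiv0\bmod F_n$ together with the several sign conventions; the underlying arithmetic is elementary.
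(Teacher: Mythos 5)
Your proof is correct, and for parts (1), (2), (4) and (5) it follows essentially the same elementary route as the paper: periodicity and oddness from the residue identity $[-tF_{n-1}]=F_n-[tF_{n-1}]$ (equivalently $\{-x\}=1-\{x\}$), the bounds in (2) from coprimality and irrationality, and (5) from $\omega=F_{n-1}/F_n-(-\omega)^n/F_n$ together with a check that the perturbation cannot cross an integer (you use the best-approximation lower bound on $\|t\omega\|$, the paper uses the $1/F_n$ spacing of rationals with denominator $F_n$ --- both work, and your $\|F_{n-2}\omega\|$ is in fact $\omega^{n-2}$, a harmless imprecision). The genuine difference is part (3): the paper argues by comparing $s_{nt}$ with $s_{n1}$, using $F_n^{-1}>2\omega^n$, and leaves the control at the upper end of the interval implicit, whereas you bound the sine argument directly, showing $\theta_{nt}-\omega^n/2=(t-\omega^n[tF_{n-1}])/F_n>0$ from $\omega^nF_n<1$ and then using the reflection $\theta_{n,F_n-t}=1-\theta_{nt}$ to place $\theta_{nt}\in(\omega^n/2,\,1-\omega^n/2)$; this is a cleaner and more airtight version of the same idea, since it makes explicit why the argument stays away from both $0$ and $1$ and avoids the unproved monotonicity implicit in ``$s_{nt}\ge s_{n1}$''. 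One small bookkeeping slip: for the oddness of $s_{nt}$ under $t\mapsto-t$ the relevant identity is $\sin(-x)=-\sin x$ (together with the oddness of $\xi_{nt}$); $\sin(\pi-x)=\sin x$ is what you need for the reflection $t\mapsto F_n-t$ in part (4).
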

\begin{proof}
~\end{proof}
\begin{enumerate}
\item Note that $\left\{ tF_{n-1}/F_{n}\right\} $ is of period $F_{n}$,
and the periodicity results follow, noting also that $\left|\sin\pi x\right|$
is of period $1$. Also we have $\left\{ -x\right\} =1-\{x\}$, from
which the oddness of $\xi_{nt}$ immediately follows. The oddness
of $s_{nt}$ then follows from the oddness of $\sin x$. The evenness
of $h_{nt}$ follows from the oddness of both $\cot$ and $\sin$.
\item Both results follow from $0<\{x\}<1$ unless $x=0$. But $\{tF_{n-1}/F_{n}\}=0$
only for $[t]=0\bmod F_{n}$ and then $\xi_{nt}=0$. And $\{t\omega\}=0$
only for $t=0$.
\item For $t=0$ we have $s_{nt}=s_{n0}=2\sin\pi\omega^{n}/2>0$. For $n=1,2$
the only possibility is $t=0$, but for $n\ge3$ and $1\le t\le F_{n}-1$,
then $s_{nt}\ge s_{n1}=2\sin\pi\left(F_{n}^{-1}-\omega^{n}\xi_{nt}\right).$
But $\left|\xi_{nt}\right|<1/2$, and $F_{n}^{-1}=\sqrt{5}\omega^{n}/(1-(-1)^{n}\omega^{2n})>2\omega^{n}$
so that $s_{n1}>s_{n0}>0$. The second part follows by noting that
substituting $F_{n}+t$ in $s_{nt}$ simply adds $\pi$ to the argument
of the sine function.
\item These now follow easily from the previous results
\item Since $t\ne0$, we have $\xi_{nt}-\xi_{wt}=\{tF_{n-1}/F_{n}\}-\{t\omega\}$.
Now by \eqref{eq:w} $t\omega=tF_{n-1}/F_{n}-t(-\omega^{n})/F_{n}$,
but $t<F_{n}$ so $\left|t\omega-tF_{n-1}/F_{n}\right|<\omega^{n}<1/F_{n}$
which means $\{t\omega\}$ is always inside the interval $\{tF_{n-1}/F_{n}\}\pm1/F_{n}$,
and we can deduce that $\left|\xi_{nt}-\xi_{wt}\right|<\omega^{n}$.
The results follow.
\end{enumerate}

\subsection{Inequalities}

We gather here various inequalities which we will need during the
main proofs.
\begin{lem}
\label{lem:ConvexSine}For $x$ in $(0,\pi/2)$ we have $2x/\pi<\sin x<x$
\end{lem}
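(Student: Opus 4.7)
The plan is to prove the two inequalities separately by appealing to the concavity/convexity of $\sin$ on $(0,\pi/2)$, since both are classical one-variable calculus estimates with no interaction with the rest of the paper's machinery.

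For the upper bound $\sin x<x$, I would introduce $f(x)=x-\sin x$ and observe that $f(0)=0$ and $f'(x)=1-\cos x>0$ on $(0,\pi/2)$, so that $f$ is strictly increasing on $[0,\pi/2)$ and therefore strictly positive on $(0,\pi/2)$. This gives $\sin x<x$ on that interval.

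For the lower bound $2x/\pi<\sin x$ (Jordan's inequality), I would use the strict concavity of $\sin$ on $[0,\pi/2]$, which follows from $(\sin)''(x)=-\sin x<0$ for $x\in(0,\pi/2)$. By strict concavity, the graph of $\sin$ lies strictly above the chord joining its endpoints $(0,0)$ and $(\pi/2,1)$, and that chord has equation $y=2x/\pi$. Evaluating at any interior $x$ therefore yields $\sin x>2x/\pi$, as required.

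Neither step presents any real obstacle; the only care needed is to keep the inequalities strict on the open interval, which in both arguments follows from the sign of the second derivative being strictly negative (or, equivalently, the derivative $1-\cos x$ being strictly positive) on $(0,\pi/2)$. I would expect the proof to run in just a few lines with no further preliminaries.
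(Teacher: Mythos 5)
Your proposal is correct and follows essentially the same route as the paper: the upper bound via monotonicity of $x-\sin x$, and the lower bound by noting that $\sin$ lies above the chord from $(0,0)$ to $(\pi/2,1)$ (the paper says ``convex'' where concave is meant, a slip your write-up avoids). No gaps.
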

\begin{proof}
The derivative of $f(x)=x-\sin x$ is $1-\cos x$ which is positive.
So $f(x)$ is increasing, and $f(0)=0$, and the right side inequality
follows. For the left side we use the fact that $\sin x$ is convex
in this interval and hence lies above the line segment joining $(0,0)$
and $(\pi/2,1)$. But this is $2x/\pi$. \end{proof}
\begin{lem}
\label{lem:ProdBounds}For $n\ge2$, let $(a_{t})$ $t=1...n$ be
a sequence of real numbers satisfying $\left|a_{t}\right|<1$ with
$A=\sum\left|a_{t}\right|<1$. Then
\[
1-A<\prod_{t=1}^{n}(1+a_{n})<\frac{1}{1-A}
\]
\end{lem}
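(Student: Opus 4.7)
The plan is to reduce both inequalities to a single Weierstrass-type estimate, namely $\prod_{t=1}^n(1-|a_t|)\ge 1-A$, valid whenever $|a_t|<1$ for each $t$.

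For the upper bound I would chain two elementary one-variable inequalities. Since $|a_t|<1$, we have $1+a_t\le 1+|a_t|$, and multiplying out $(1+|a_t|)(1-|a_t|)=1-|a_t|^2\le 1$ gives $1+|a_t|\le (1-|a_t|)^{-1}$. Taking products,
\[
\prod_{t=1}^n(1+a_t)\;\le\;\prod_{t=1}^n\frac{1}{1-|a_t|}\;=\;\frac{1}{\prod_{t=1}^n(1-|a_t|)},
\]
so the upper bound $1/(1-A)$ follows once we know $\prod_t(1-|a_t|)\ge 1-A$. For the lower bound the inequality $1+a_t\ge 1-|a_t|\ge 0$ (the last from $|a_t|<1$) immediately gives $\prod_t(1+a_t)\ge\prod_t(1-|a_t|)\ge 1-A$ from the same estimate.

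It remains to prove the Weierstrass inequality $\prod_{t=1}^n(1-|a_t|)\ge 1-\sum_{t=1}^n|a_t|$, which I would do by induction on $n$. The case $n=1$ is trivial equality. For the induction step, multiplying the hypothesis $\prod_{t=1}^{n-1}(1-|a_t|)\ge 1-\sum_{t=1}^{n-1}|a_t|$ by the non-negative quantity $1-|a_n|$ yields
\[
\prod_{t=1}^n(1-|a_t|)\;\ge\;\bigl(1-|a_n|\bigr)\Bigl(1-\sum_{t=1}^{n-1}|a_t|\Bigr)\;=\;1-\sum_{t=1}^n|a_t|+|a_n|\sum_{t=1}^{n-1}|a_t|\;\ge\;1-A,
\]
completing the induction. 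The hypothesis $A<1$ is then used exactly once, to guarantee that $1-A>0$ so that the upper bound $1/(1-A)$ is a finite positive number.

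There is no real obstacle here: the argument is standard and elementary. The only minor subtlety is strictness, where the displayed inequalities become equalities only in the degenerate case that every $a_t$ vanishes; if at least one $a_t\ne 0$, the strict forms in the statement hold, since then $1+|a_t|<(1-|a_t|)^{-1}$ strictly (and $|a_n|\sum_{t<n}|a_t|>0$ for some $n$ in the inductive step).
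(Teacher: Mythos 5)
Your proof is correct and, for the lower bound, it is essentially the paper's argument: both reduce to $\prod_{t}(1+a_{t})\ge\prod_{t}(1-|a_{t}|)$ and then establish the Weierstrass-type estimate $\prod_{t}(1-|a_{t}|)\ge 1-A$ by induction. For the upper bound the routes differ. The paper bounds $\prod_{t}(1+|a_{t}|)$ by expanding the product and dominating the terms of total degree $r$ by $A^{r}$, giving $\prod_{t}(1+|a_{t}|)<\sum_{r=0}^{n}A^{r}=(1-A^{n+1})/(1-A)<1/(1-A)$; you instead use the termwise inequality $1+|a_{t}|\le(1-|a_{t}|)^{-1}$ and recycle the same Weierstrass bound, so one inductive lemma delivers both sides and the hypothesis $A<1$ enters only to invert $1-A$. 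Your version is arguably tidier; the paper's yields the marginally sharper intermediate bound $(1-A^{n+1})/(1-A)$, which is never needed. One caveat concerns your closing remark on strictness: it is not true that some $a_{t}\ne0$ suffices for the strict inequalities. If exactly one term is nonzero and negative, say $a_{1}<0$ and $a_{2}=\dots=a_{n}=0$, then $\prod_{t}(1+a_{t})=1-|a_{1}|=1-A$ and the strict lower bound fails (your inductive gain $|a_{n}|\sum_{t<n}|a_{t}|$ needs two nonzero terms). This is a defect of the lemma as stated rather than of your argument --- the paper's own proof has the same issue, since its claim that the $n=2$ case of $\prod_{t}(1-|a_{t}|)>1-A$ is ``clearly true'' also requires both terms nonzero --- and it is harmless in context, because every application of the lemma uses the bounds only through $1\pm O(\cdot)$ error estimates, for which the non-strict form suffices.
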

\begin{proof}
$\prod_{t=1}^{n}(1+a_{t})\ge\prod_{t=1}^{n}(1-\left|a_{t}\right|)$.
Then $\prod_{t=1}^{n}(1-\left|a_{t}\right|)>1-A$ is clearly true
for $n=2$ and the left hand side of the result follows by induction.

Similarly $\prod_{t=1}^{n}(1+a_{t})\le\prod_{t=1}^{n}(1+\left|a_{t}\right|)$.
Then $\prod_{t=1}^{n}(1+\left|a_{t}\right|)<\sum_{r=0}^{n}\left(\sum\left|a_{t}\right|\right)^{r}=(1-A^{n+1})/(1-A)$
proving the right hand side of the result.
\end{proof}

\subsection{Results on Fibonacci numbers}

We will need various standard results concerning the Fibonacci sequence
$\left(F_{i}\right)=\left(0,1,1,2,3,5,8...\right)$, and these are
summarised without proof in Appendix B. We give here some other simple
results we will need later.
\begin{lem}
\label{lem:Fn-1Inverse}For $n\ge1$, defining $[0]^{-1}=[0]\bmod1$,
the inverse of $F_{n-1}\bmod F_{n}$ exists and is $\left[(-1)^{n}F_{n-1}\right]$\end{lem}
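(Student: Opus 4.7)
The plan is to reduce the claim directly to Cassini's identity, which we may take as a standard Fibonacci result from Appendix B. Cassini's identity states $F_{n+1}F_{n-1} - F_n^2 = (-1)^n$, so reducing modulo $F_n$ yields $F_{n+1}F_{n-1} \equiv (-1)^n \pmod{F_n}$. The defining recurrence $F_{n+1} = F_n + F_{n-1}$ then gives $F_{n+1} \equiv F_{n-1} \pmod{F_n}$, and substituting into the previous congruence produces
\[
F_{n-1}^2 \equiv (-1)^n \pmod{F_n}.
\]
Multiplying both sides by $(-1)^n$ (and noting $(-1)^{2n}=1$) shows $F_{n-1}\cdot (-1)^n F_{n-1} \equiv 1 \pmod{F_n}$. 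Reducing $(-1)^n F_{n-1}$ to the canonical residue set $\{0,\ldots,F_n-1\}$ using the $[\,\cdot\,]$ convention from Section \ref{sec:Preliminaries} identifies $[(-1)^n F_{n-1}]$ as the multiplicative inverse of $F_{n-1}\bmod F_n$, which is precisely the content of the lemma.

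The only remaining task is to dispose of the degenerate cases $n=1,2$, where $F_n = 1$ and the residue set collapses to $\{0\}$. The paper's convention $[0]^{-1}=[0]\bmod 1$ handles these uniformly: for $n=1$ we have $(-1)^1 F_0 = 0 \equiv 0 \pmod 1$, and for $n=2$ we have $(-1)^2 F_1 = 1 \equiv 0 \pmod 1$, both consistent with the stated convention. There is no substantive obstacle here; the proof is essentially a one-line consequence of Cassini, and the only care required is bookkeeping around the $[\,\cdot\,]$ notation and the mod-$1$ edge cases.
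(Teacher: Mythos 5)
Your argument is exactly the paper's proof: apply Cassini's identity \eqref{eq:FibProduct}, substitute $F_{n+1}=F_{n}+F_{n-1}$ to get $F_{n-1}^{2}\equiv(-1)^{n}\bmod F_{n}$, and multiply by $(-1)^{n}$. The additional explicit check of the $n=1,2$ cases under the $[0]^{-1}$ convention is fine but adds nothing beyond the paper's treatment.
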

\begin{proof}
From \eqref{eq:FibProduct} we have $F_{n+1}F_{n-1}-F_{n}^{2}=(-1)^{n}$
and substituting the definition $F_{n+1}=F_{n}+F_{n-1}$ gives $F_{n-1}^{2}\equiv(-1)^{n}\bmod F_{n}$,
whence $F_{n-1}.(-1)^{n}F_{n-1}\equiv1\bmod F_{n}$.
\end{proof}

\subsubsection{\label{sub:Fibonacci-representation}Representation by Fibonacci
numbers}

We will use Zeckendorf's representation of $k\ge1$ as a sum of Fibonacci
numbers via the following definition:%
\footnote{We give here our own recursive definition, as this obviates the need
to prove existence or uniqueness of the resulting representation.
It also provides for straightforward translation into modern programming
languages, and is easily extended to an Ostrowski representation.%
}
\begin{defn}
\label{def:FibSumLength} Fibonacci summation algorithm for the Zeckendorf
representation

For $n\ge0$, define the \emph{Fibonacci floor} $F(n)$ to be the
largest Fibonacci number $F_{i}\le n$, for example $F(0)=F_{0},\, F(1)=F_{2},\, F(7)=F_{5}.$

We now define the \emph{Fibonacci sum} $\sum^{F}(n)$ to be the series
defined recursively by $\sum^{F}(n)=F(n)+\sum^{F}(n-F(n)),\;\sum^{F}(0)=F_{0}$.
For example $\sum_{F}(7)=F_{5}+F_{3}+F_{0}=5+2+0$. 

We define the Fibonacci length $F_{L}(n)$ of $n$ to be the length
of the Fibonacci sum ignoring the $F_{0}$ element. For example the
Fibonacci lengths of $0,7$ are $0,2$ respectively.
\end{defn}
Since algorithm is deterministic, it provides a unique representation
for each $n$. Further for $i\ge2$ we have $F_{i}\le n<F_{i+1}$,
and $F_{i+1}=F_{i}+F_{i-1}$, we have $n-F(n)<F_{i+1}-F_{i}=F_{i-1}$,
so that if $F_{i}$ is in the representation, $F_{i-1}$ is not. Finally
note that since $F_{2}=F_{1}=1$, we never have $F(n)=F_{1}$. 
\begin{defn}
The Binary Fibonacci representation $n=\sum_{s=1}^{m}b_{s}F_{s}$
for $n\ge1$ 

We will find it useful to translate the sum $\sum^{F}(n)$ into the
equivalent representation $\sum_{s=1}^{m}b_{s}F_{s}$ where $b_{s}=1$
if $F_{s}$ is in $\sum^{F}(n),$and $b_{s}=0$ otherwise, and $F_{m}=F(n)$.
\end{defn}
Note that the Fibonacci length $F_{L}(n)$ of $n$ is now $\sum_{s=1}^{m}b_{s}\le m-1$
since $F_{1}$ is never in $\sum^{F}(n)$.
\begin{lem}
\label{lem:FLest} If $n\ge1$ has the representation $\sum_{s=1}^{m}b_{s}F_{s}$
then: 
\begin{eqnarray}
m & \le & \left\lfloor \left(\log n\,+1\right)/\log(1+\omega)\right\rfloor \\
F_{L}(n) & \le & \left\lfloor (\log n\,+1)/\log(2+\omega)\right\rfloor 
\end{eqnarray}
\end{lem}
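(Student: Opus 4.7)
The plan is to derive both bounds from the elementary Binet-type inequality
\[
F_s\ge (1+\omega)^{s-2} \qquad (s\ge 1),
\]
combined with the structural rules of the Fibonacci-sum algorithm. This inequality follows by a short induction using $F_1=F_2=1$, the recurrence $F_{s+1}=F_s+F_{s-1}$, and the quadratic identity $(1+\omega)^2=2+\omega$ (equivalent to $\omega^2+\omega=1$). The only other input I will need is the numerical fact $\log(2+\omega)\le 1$, i.e.\ $\omega\le e-2\approx 0.718$, which will let me absorb small additive constants into the target $\log n+1$.

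For the first bound, write $n=\sum_{s=1}^{m}b_sF_s$ with $b_m=1$. Then $F_m\le n$ and the Binet estimate gives $(1+\omega)^{m-2}\le n$. Using $2\log(1+\omega)=\log(2+\omega)\le 1$ this rearranges to
\[
m\log(1+\omega)\le 2\log(1+\omega)+\log n\le 1+\log n,
\]
so that dividing by $\log(1+\omega)$ and taking the floor yields the claim.

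For the second bound let $k=F_L(n)=\sum_{s}b_s$. The algorithm of Definition~\ref{def:FibSumLength} excludes the index $s=1$ (since $F(n)\neq F_1$) and prohibits two consecutive indices from both contributing (the observation $n-F(n)<F_{i-1}$ noted after the definition). Hence the $k$ indices with $b_s=1$ lie at positions $s_1<s_2<\cdots<s_k$ satisfying $s_1\ge 2$ and $s_{j+1}\ge s_j+2$, and a minimum-packing argument gives
\[
n=\sum_{j=1}^{k}F_{s_j}\ge\sum_{j=1}^{k}F_{2j}\ge F_{2k}.
\]
The Binet estimate again delivers $(1+\omega)^{2k-2}\le n$, and since $2k\log(1+\omega)=k\log(2+\omega)$, the same manipulation as before produces $k\log(2+\omega)\le 1+\log n$, and the floor bound follows.

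The argument is essentially bookkeeping and no serious obstacle is anticipated. The one step worth care is the minimum-packing inequality $n\ge F_{2k}$, which requires \emph{both} structural properties recorded after Definition~\ref{def:FibSumLength}: the exclusion of index $s=1$ and the non-adjacency forced by the recursive Fibonacci-floor algorithm. Everything else reduces to the single numerical estimate $\log(2+\omega)\le 1$.
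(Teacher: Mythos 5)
Your proof is correct and takes essentially the same route as the paper: both arguments rest on the exponential growth of the Fibonacci numbers (you via the induction bound $F_s\ge(1+\omega)^{s-2}$, the paper via the exact Binet formula), on the structural facts that $F_1$ never occurs and that no two consecutive indices occur, and on the identity $(1+\omega)^2=2+\omega$ together with the numerical fact $2+\omega<e$. The only cosmetic difference is in the second bound, where the paper uses $F_L(n)\le\left\lfloor m/2\right\rfloor$ and halves its bound on $m$, whereas you bound $n\ge F_{2k}$ directly by a packing argument; the two steps are equivalent.
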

\begin{proof}
Since $F_{m}=F(n)\le n$, we use from \eqref{eq:Fn=00003D} $F_{m}=(\omega^{-m}-(-\omega)^{m})/\sqrt{5}$
and we deduce (using $\log(1+x)<x$, and $\omega^{-1}=1+\omega)$):
\begin{eqnarray}
m & = & \max\{j:\omega^{-j}\le\sqrt{5}n+(-\omega)^{j}\}\label{eq:kEst-1}\\
 & = & \max\{j:j\le\log\left(\sqrt{5}n+(-\omega)^{j}\right)/\log(1+\omega)\nonumber \\
 & \le & \left\lfloor \left(\log n\,+1\right)/\log(1+\omega)\right\rfloor \nonumber 
\end{eqnarray}

Now the Fibonacci length $F_{L}(n)=\sum_{s=1}^{m}b_{s}\le m-1$, but
in fact we can do better than this. Since $\sum^{F}(n)$ contains
no two consecutive $F_{i}$ we have: 
\begin{equation}
F_{L}(n)\le\left\lfloor m/2\right\rfloor \label{eq:FLestimate}
\end{equation}

The result follows using $(1+\omega)^{2}=2+\omega$.
\end{proof}

\section{\label{sec:MainStart}The Decomposition $Q_{n}=A_{n}B_{n}C_{n}$}

As described in section \ref{sub:Overview}, we develop a decomposition
of $Q_{n}$ into a product of three other products, each of which
converges to a positive constant. We shall prove the convergence of
the first of these products within this section (as it is very straightforward),
and the other two we shall deal with in subsequent sections.

Our central motivation here is to substitute the Fibonacci identity
$\omega=\left(F_{n-1}/F_{n}\right)-(-\omega)^{n}/F_{n}$ (see \eqref{eq:wEstimate})
into the definition of $Q_{n}$ and hence express $\left|\prod2\sin\pi r\omega\right|$
as a perturbation of the rational sine product $\left|\prod2\sin\pi r(F_{n-1}/F_{n})\right|$,
the latter product being equal to $F_{n}$ (see \eqref{eq:lemSinProduct}).
This reduces the problem to one of demonstrating that the perturbation
function itself has suitable behaviour, and this proves equivalent
to showing that the product $B_{n}C_{n}$ converges as $n$ grows.
However rather than treating $B_{n}C_{n}$ as a single product, it
is simpler to prove separately that each of $B_{n}$ and $C_{n}$
converge.

The substitution above gives us $Q_{n}=\left|\prod2\sin\pi r\left((F_{n-1}/F_{n})-(-\omega)^{n}/F_{n}\right)\right|$
which is a perturbation of the the argument in each term of $\left|\prod2\sin\pi r(F_{n-1}/F_{n})\right|$
by a delta of $-r(-\omega)^{n}/F_{n}$. The sum of these deltas is
non-zero, but some of the techniques we shall use to prove the convergence
of $B_{n},C_{n}$ require that the sum of the deltas is 0. Fortunately,
as we shall see, we can fix this by re-basing the arguments to result
in a delta of $\omega^{n}(r/F_{n}-1/2)$ - which then provides a zero
sum for the deltas. This is most economically achieved once and for
all at the beginning of our proof, and will simplify later proofs
at the cost introducing a non-intuitive first step below. However
once done, we proceed to make the substitution for $\omega$, and
the decomposition then follows naturally.

\begin{lem}
\label{lem:QnFactored}For $n\ge1$ and $s_{nt}=2\sin\pi\left(t/F_{n}-\omega^{n}\left(\frac{[F_{n-1}t]}{F_{n}}-\frac{1}{2}\right)\right)$
we have $Q_{n}=\left|\prod_{r=1}^{F_{n}}\Bigl(2\sin\pi r\omega\Bigr)\right|=A_{n}\, B_{n}\, C_{n}$
where:
\end{lem}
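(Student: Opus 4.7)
The plan is to substitute the Fibonacci convergent identity $\omega = F_{n-1}/F_n - (-\omega)^n/F_n$ from \eqref{eq:wEstimate} into every factor of $Q_n$, thereby converting the product into a controlled perturbation of the rational sine product $\prod_{t=1}^{F_n-1} 2\sin(\pi t/F_n) = F_n$ of Appendix \ref{sec:SineAppendix}. First I peel off the endpoint $r = F_n$: since $F_n\omega = F_{n-1} - (-\omega)^n$ and $F_{n-1}$ is an integer, $|2\sin\pi F_n\omega| = 2\sin\pi\omega^n$, which will supply the core of $A_n$ once I multiply and divide the remaining product by $F_n$.

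For the indices $r \in \{1,\ldots,F_n-1\}$, Lemma \ref{lem:Fn-1Inverse} makes $t := [rF_{n-1}]$ a bijection of $\{1,\ldots,F_n-1\}$ onto itself with inverse $r(t) = [(-1)^n F_{n-1} t]$. After the substitution, absorbing the integer part of $rF_{n-1}/F_n$ into the absolute value and re-centring the residue as $[F_{n-1}t]/F_n = \xi_{nt} + 1/2$, I find that
\[
|2\sin\pi r(t)\omega| \;=\; \bigl|2\sin\pi(u_{nt}\pm\omega^n/2)\bigr|, \qquad u_{nt} := t/F_n - \omega^n\xi_{nt},
\]
where the choice of sign is dictated by the parity of $n$. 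The essential structural point is that the involution $t \mapsto F_n - t$, which by Lemma \ref{lem:baseSeq} sends $\xi_{nt}\mapsto -\xi_{nt}$ and $r(t)\mapsto F_n - r(t)$, \emph{reverses} this sign: if $t$ contributes $|2\sin\pi(u_{nt}-\omega^n/2)|$ then its partner $F_n-t$ contributes $|2\sin\pi(u_{nt}+\omega^n/2)|$.

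Applying the product-to-sum identity $4\sin\pi(u-\alpha)\sin\pi(u+\alpha) = (2\sin\pi u)^2 - (2\sin\pi\alpha)^2$ with $u = u_{nt}$ and $\alpha = \omega^n/2$, and noting that $s_{n,0} = 2\sin(\pi\omega^n/2)$ from the definition \eqref{eq:snt}, I obtain for each pair
\[
|2\sin\pi r(t)\omega|\cdot\bigl|2\sin\pi r(F_n-t)\omega\bigr| \;=\; s_{nt}^2 - s_{n,0}^2 \;=\; s_{nt}^2\bigl(1 - s_{n,0}^2/s_{nt}^2\bigr),
\]
with positivity on the right guaranteed by Lemma \ref{lem:baseSeq}(3). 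Taking the product over pairs, using the palindromic identity $\prod_{t=1}^{F_n-1} s_{nt} = \prod_{\text{pairs}} s_{nt}^2$ for odd $F_n$ (with a self-pair at $t=F_n/2$ for even $F_n$), and reassembling with the $r=F_n$ factor and the classical $\prod_{t=1}^{F_n-1}2\sin(\pi t/F_n) = F_n$, produces exactly $Q_n = A_n B_n C_n$ with $C_n$ interpreted through the generalised product notation of subsection \ref{sub:GenSummation}.

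The main obstacle will be the parity bookkeeping: the sign of $(-\omega)^n$ together with the two cases for $r(t)$ must be tracked consistently so that both parities of $n$ collapse to the same symmetric form $u_{nt}\pm\omega^n/2$, which is precisely what unlocks the pairing step. A secondary subtlety arises at the fixed point $t=F_n/2$ of the involution when $3\mid n$ (so $F_n$ is even): this index contributes only a half-factor $\sqrt{1 - s_{n,0}^2/s_{n,F_n/2}^2}$, and matching this to the formula for $C_n$ relies on the fractional-endpoint interpretation of $\prod_{t=1}^{F_n/2}$ supplied by the generalised product notation.
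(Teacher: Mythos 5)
Your proposal is correct and follows essentially the same route as the paper: the convergent substitution $\omega=F_{n-1}/F_n-(-\omega)^n/F_n$, the residue bijection $t=[rF_{n-1}]$ from Lemma \ref{lem:Fn-1Inverse}, the $t\leftrightarrow F_n-t$ pairing combined with $4\sin\pi(u-\alpha)\sin\pi(u+\alpha)=4\sin^2\pi u-4\sin^2\pi\alpha$ to produce $s_{nt}^2-s_{n0}^2$, and $\prod_{t=1}^{F_n-1}2\sin(\pi t/F_n)=F_n$. The only differences are organizational --- you substitute first and pair afterwards, working with $Q_n$ itself and tracking the $\pm\omega^n/2$ sign by the parity of $n$, whereas the paper symmetrizes $Q_n^2$ in $r$ first and kills the sign via $(-1)^{(F_n-1)(F_{n-1}+1)}=1$ --- and your explicit treatment of the self-paired term at $t=F_n/2$ is, if anything, more careful about the endpoint convention than the paper's own square-root step.
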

\begin{eqnarray}
A_{n} & = & 2F_{n}\sin\pi\omega^{n}\rightarrow\frac{2\pi}{\sqrt{5}}\label{eq:QnDecomp}\\
B_{n} & = & \left(\prod_{t=1}^{F_{n}-1}\frac{s_{nt}}{2\sin\pi\frac{t}{F_{n}}}\right)\\
C_{n} & = & \prod_{t=1}^{F_{n}/2}\left(1-\frac{s_{n0}^{2}}{s_{nt}^{2}}\right)
\end{eqnarray}

We first deal with the convergence of $A_{n}$ by observing that since
$\omega<1$, we have $A_{n}=2F_{n}\sin\pi\omega^{n}\sim2F_{n}\pi\omega^{n}$
and the result follows by \eqref{eq:Fnw}.

We start the main proof by carrying out the step discussed above to
re-base our perturbation deltas. First we exploit the symmetry of
the sine function around $\pi/2$, observing that a change of variables
$r\mapsto F_{n}-r$ gives us $\prod_{r=1}^{F_{n}-1}\Bigl(2\sin\pi r\omega\Bigr)=\prod_{r=1}^{F_{n}-1}\left(2\sin\pi(F_{n}-r)\omega\right)$
and hence for any $n\ge1$, using the product of sines formula:

\begin{eqnarray}
Q_{n}^{2} & = & \left(2\sin\pi F_{n}\omega\right)^{2}\prod_{r=1}^{F_{n}-1}\Bigl(2\sin\pi r\omega\Bigr)\left(2\sin\pi(F_{n}-r)\omega\right)\nonumber \\
 & = & \left(2\sin\pi F_{n}\omega\right)^{2}\prod_{r=1}^{F_{n}-1}2\left(\cos\pi(F_{n}-2r)\omega-\cos\pi F_{n}\omega\right)
\end{eqnarray}

We can now use identity \eqref{eq:Fnw} and the cosine double angle
formula to obtain:
\begin{eqnarray}
Q_{n}^{2} & = & \left(2\sin\pi\omega^{n}\right)^{2}\prod_{r=1}^{F_{n}-1}2(-1)^{F_{n-1}}\left(\cos\pi((-\omega)^{n}+2r\omega)-\cos\pi(-\omega)^{n}\right)\nonumber \\
 & = & \left(2\sin\pi\omega^{n}\right)^{2}(-1)^{(F_{n}-1)F_{n-1}}\prod_{r=1}^{F_{n}-1}4\left(\sin^{2}\frac{1}{2}\pi\omega^{n}-\sin^{2}\pi(r\omega+\frac{1}{2}(-\omega)^{n})\right)\nonumber \\
 & = & \left(2\sin\pi\omega^{n}\right)^{2}(-1)^{(F_{n}-1)(F_{n-1}+1)}\prod_{r=1}^{F_{n}-1}4\left(\sin^{2}\pi(r\omega+\frac{1}{2}(-\omega)^{n})-\sin^{2}\frac{1}{2}\pi\omega^{n}\right)
\end{eqnarray}

Now if $F_{n}$ is odd then $F_{n}-1$ is even, and if $F_{n}$ is
even then by \eqref{eq:FnEven} $F_{n-1}+1$ is even, and so for any
$n$ we have $(-1)^{(F_{n}-1)(F_{n-1}+1)}=1$. We have therefore shown
that
\begin{eqnarray}
Q_{n}^{2} & = & \left(2\sin\pi\omega^{n}\right)^{2}\prod_{r=1}^{F_{n}-1}4\left(\sin^{2}\pi(r\omega+\frac{1}{2}(-\omega)^{n})-\sin^{2}\frac{1}{2}\pi\omega^{n}\right)\label{eq:QnEst2}
\end{eqnarray}

This completes the re-basing step. We are now ready to develop the
expression for $Q_{n}$ as a perturbation of the rational sine product
$\left|\prod2\sin\pi r(F_{n-1}/F_{n})\right|$.

The product above is empty for $n=1,2$. For $n\ge3$ we develop the
second sine term above by substituting the Fibonacci identity and
then using \eqref{eq:Fnw} to obtain

\begin{equation}
\sin\pi(r\omega+(-\omega)^{n}/2)=\sin\pi\left(\frac{rF_{n-1}}{F_{n}}-(-\omega)^{n}(\frac{r}{F_{n}}-\frac{1}{2})\right)
\end{equation}

Substituting the residue $t=[rF_{n-1}]$ in the right hand term and
using Lemma \ref{lem:Fn-1Inverse} we obtain 
\begin{equation}
\sin\pi(r\omega+(-\omega)^{n}/2)=\pm\sin\pi\left(\frac{t}{F_{n}}-(-\omega)^{n}(\frac{[(-1)^{n}F_{n-1}t]}{F_{n}}-\frac{1}{2})\right)
\end{equation}

Now observe that $x\mapsto\{x\}-1/2$ is an odd function (for non-integer
$x$), and we use this fact to simplify the right side to obtain finally
for every $1\le r\le F_{n}-1$  
\begin{eqnarray}
\Bigl|\sin\pi(r\omega+(-\omega)^{n}/2)\Bigr| & = & \Bigl|\sin\pi\left(\frac{t}{F_{n}}-\omega^{n}(\frac{[F_{n-1}t]}{F_{n}}-\frac{1}{2})\right)\Bigr|
\end{eqnarray}

Now the right hand side is $\Bigl|s_{nt}\Bigr|$, and for $1\le r\le F_{n}-1$
we also have $1\le t\le F_{n-1}$. In this range for $t$ we have
$s_{nt}>0$ by Lemma \ref{lem:baseSeq}. This gives us for $1\le s,t\le F_{n}-1$:
\begin{equation}
\Bigl|\sin\pi(r\omega+(-\omega)^{n}/2)\Bigr|=s_{nt}\label{eq:SineInequality}
\end{equation}

If we further observe that for $1\le r\le F_{n}-1$, $t=[rF_{n-1}]$
runs through a complete set of non-zero residues, so we can rewrite
\eqref{eq:QnEst2} for $n\ge1$ as:
\begin{eqnarray}
Q_{n}^{2} & = & \left(2\sin\pi\omega^{n}\right)^{2}\prod_{t=1}^{F_{n}-1}\left(s_{nt}^{2}-s_{n0}^{2}\right)\nonumber \\
 & = & \left(2\sin\pi\omega^{n}\right)^{2}\left(\prod_{t=1}^{F_{n}-1}s_{nt}\right)^{2}\prod_{t=1}^{F_{n}-1}\left(1-\frac{s_{n0}^{2}}{s_{nt}^{2}}\right)\label{eq:QnEst2-1}
\end{eqnarray}

We have almost proved Lemma \ref{lem:QnFactored}. To obtain the final
result, we deduce from \eqref{eq:lemSinProduct} that $\prod_{t=1}^{F_{n}-1}2\sin\pi\frac{t}{F_{n}}=F_{n}$
and the result follows (using $s_{nt}=s_{n(F_{n}-t)}$ from Lemma
\ref{lem:baseSeq}).

\section{\label{sec:Cn}The Convergence of $C_{n}=\prod_{t=1}^{F_{n}/2}\left(1-\frac{s_{n0}^{2}}{s_{nt}^{2}}\right)$}

In this step we show $C_{n}$ converges to a strictly positive constant.
This is not as straightforward as it appears at first sight as there
are terms in $s_{nt}$ which oscillate about $0$ but which are not
alternating. We therefore cannot assume that $C_{n}$ is decreasing.
Fortunately we are able to compare $C_{n}$ with a closely related
sequence which \emph{is} decreasing and therefore converges.

\begin{thm}
\label{thm:P3}The sequence $C_{n}=\prod_{t=1}^{F_{n}/2}\left(1-\frac{s_{n0}^{2}}{s_{nt}^{2}}\right)$
converges to 
\[
\prod_{t=1}^{\infty}\left(1-\frac{1}{20\left(t-\frac{1}{\sqrt{5}}\left(\{t\omega\}-\frac{1}{2}\right)\right)^{2}}\right)\simeq0.928
\]

\end{thm}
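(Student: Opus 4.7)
My plan is to take logarithms and apply a Tannery-style dominated-convergence argument to
\[
\log C_n \;=\; \sum_{t=1}^{F_n/2}\log\!\left(1-\frac{s_{n0}^2}{s_{nt}^2}\right).
\]
Every summand is well-defined and negative since $s_{nt}\ge s_{n0}>0$ for $1\le t\le F_n-1$ by Lemma~\ref{lem:baseSeq}(3), so $C_n\in(0,1]$. The argument has three steps: (a)~identify the pointwise limit of each term, (b)~produce a summable bound on $s_{n0}^2/s_{nt}^2$ uniform in $n$, and (c)~interchange limit and sum.

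For the pointwise limit, fix $t\ge 1$. Since $\omega^n\to 0$, $s_{n0}=2\sin(\pi\omega^n/2)=\pi\omega^n+O(\omega^{3n})$. Combining Lemma~\ref{lem:baseSeq}(5) (i.e.\ $\xi_{nt}\to\xi_{\infty t}=\{t\omega\}-\tfrac12$) with Binet's identity $F_n\omega^n\to 1/\sqrt5$,
\[
s_{nt}\;=\;2\sin\pi\omega^n\!\left(\frac{t}{F_n\omega^n}-\xi_{nt}\right)\;=\;2\pi\omega^n\bigl(t\sqrt5-\xi_{\infty t}\bigr)+o(\omega^n).
\]
Hence $s_{n0}^2/s_{nt}^2\to 1/\bigl(4(t\sqrt5-\xi_{\infty t})^2\bigr)=1/\bigl(20(t-\xi_{\infty t}/\sqrt5)^2\bigr)$, exactly the factor in the target product.

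For the uniform dominating bound, set $u_{nt}:=t/F_n-\omega^n\xi_{nt}$. Lemma~\ref{lem:baseSeq}(2) gives $|\xi_{nt}|<1/2$, while $1/F_n>\omega^n/2$ eventually (since $F_n\omega^n\to 1/\sqrt5>1/2$), so $u_{nt}\in(0,1/2)$ for $1\le t\le F_n/2$ and all $n$ large. Lemma~\ref{lem:ConvexSine} then yields
\[
s_{nt}\;\ge\;4u_{nt}\;\ge\;\frac{4}{F_n}\Bigl(t-\frac{F_n\omega^n}{2}\Bigr)\;\ge\;\frac{c\,t}{F_n}\qquad (t\ge 2)
\]
for some constant $c>0$ independent of $n$, because $F_n\omega^n/(2t)$ stays strictly below $1$. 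Combined with $s_{n0}\le\pi\omega^n$, this gives
\[
\frac{s_{n0}^2}{s_{nt}^2}\;\le\;\frac{(\pi F_n\omega^n/c)^2}{t^2}\;\le\;\frac{M}{t^2}
\]
uniformly in large $n$, for some $M<1$. The solitary $t=1$ term converges to a specific value strictly less than $1$, hence is bounded away from $1$. Therefore $\bigl|\log(1-s_{n0}^2/s_{nt}^2)\bigr|\le K/t^2$ uniformly, a summable majorant. (Any half-weight endpoint term arising from the generalised product notation when $F_n$ is odd has $t/F_n$ bounded away from integer values, so the same bound applies.)

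Tannery's theorem now justifies passing to the limit term-by-term, giving
\[
\log C_n\;\longrightarrow\;\sum_{t=1}^{\infty}\log\!\left(1-\frac{1}{20\bigl(t-(\{t\omega\}-\tfrac12)/\sqrt5\bigr)^2}\right),
\]
and exponentiating yields the stated constant $\simeq 0.928$. The principal obstacle is step~(b): the $O(t^{-2})$ uniform bound rests on $u_{nt}$ never coming anomalously close to an integer, which is secured here because the perturbation $\omega^n\xi_{nt}$ is of strictly smaller order than the grid spacing $1/F_n$ of the unperturbed points $t/F_n$; the Diophantine regularity of the golden mean enters precisely at this point, through $F_n\omega^n\to 1/\sqrt5$.
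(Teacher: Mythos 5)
Your argument is correct, and it reaches the stated limit by a somewhat different organisation than the paper's. You take logarithms and run a Tannery/dominated-convergence argument: pointwise convergence of each term $s_{n0}^{2}/s_{nt}^{2}\to 1/\bigl(20(t-\xi_{\infty t}/\sqrt5)^{2}\bigr)$ plus a uniform summable majorant $K/t^{2}$ obtained from $s_{n0}\le\pi\omega^{n}$ and $s_{nt}\ge c\,t/F_{n}$. The paper instead splits the product at an explicit cut-off $q=\lceil\omega^{-3n/5}\rceil$, shows the tail $t\ge q$ contributes $1-O(\omega^{n/5})$, approximates the head $t<q$ by the limiting terms $u_{t}=2\sqrt5\bigl(t-(\{t\omega\}-\tfrac12)/\sqrt5\bigr)$ with explicit error factors, and compares with the monotone decreasing auxiliary product $U_{q}=\prod_{t\le q}(1-1/u_{t}^{2})$, which yields not only convergence but a rate, $C_{n}=U_{\infty}\bigl(1+O(\omega^{n/5})\bigr)$. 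The underlying estimates are essentially the same (Lemma \ref{lem:baseSeq}, Lemma \ref{lem:ConvexSine}, $F_{n}\omega^{n}\to1/\sqrt5$); your dominated-convergence packaging is shorter and avoids the cut-off bookkeeping, at the cost of giving no convergence rate. Two small repairs: your parenthetical ``$F_{n}\omega^{n}\to1/\sqrt5>1/2$'' is numerically false ($1/\sqrt5\approx0.447$); what you actually need for $u_{nt}>0$ is $F_{n}\omega^{n}<2$ (indeed $1/F_{n}\approx\sqrt5\,\omega^{n}>\omega^{n}/2>\omega^{n}|\xi_{nt}|$), which holds. Also, at $t=F_{n}/2$ (when $3\mid n$) the quantity $u_{nt}=\tfrac12-\omega^{n}\xi_{nt}$ can slightly exceed $\tfrac12$, so Lemma \ref{lem:ConvexSine} does not literally apply there; since $\sin\pi u_{nt}$ is then near its maximum (or by the symmetry $s_{n,F_{n}-t}=s_{nt}$), the bound survives, but the edge case should be stated.
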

For $n\in\{0,1,2\}$the product defining $C_{n}$ is empty and $C_{n}=1$.
For the rest of this section we will assume $n\ge3$, and so by Lemma
\ref{lem:baseSeq} we have for $1\le t\le F_{n}-1$ that $s_{nt}>s_{n0}>0$

Hence we have $0<(1-s_{n0}^{2}/s_{nt}^{2})<1$ for every term in $C_{n}$,
and so $0<C_{n}<1$ for $n\ge3$. 

At this point we need to establish some estimates for the terms $s_{n0}/s_{nt}$.
First we develop some general estimates valid for all $0\le t<F_{n}$.

For $t=0$ we have 
\begin{equation}
s_{n0}=2\sin\pi(\omega^{n}/2)=\pi\omega^{n}(1+O(\omega^{2n}))\label{eq:sn0}
\end{equation}

For $1\le t\le F_{n}/2$, from \eqref{eq:Fnw} $F_{n}^{-1}=\sqrt{5}\omega^{n}(1+O(\omega^{2n}))$,
and from \eqref{eq:wEstimate}  $F_{n-1}/F_{n}=\omega+O(\omega^{2n})$
and so: 
\begin{eqnarray}
s_{nt} & = & 2\sin\pi\left(\left(t\sqrt{5}\omega^{n}(1+O(\omega^{2n})\right)-\omega^{n}\left(\{t\omega\}+tO(\omega^{2n})-\frac{1}{2}\right)\right)\nonumber \\
 & = & 2\sin\pi\omega^{n}t\left(\sqrt{5}-\frac{1}{t}\left(\{t\omega\}-\frac{1}{2}\right)+O(\omega^{2n})\right)\label{eq:sntGenEst}
\end{eqnarray}

Now let $q=\left\lceil \omega^{-3n/5}\right\rceil $%
\footnote{Here $3/5$ is chosen to optimise convergence, though other values
are possible. %
}. For $t\ge q$ we use $(\pi/2)\sin x>x$ (from Lemma \ref{lem:ConvexSine})
in \eqref{eq:sntGenEst} to give us for large enough $n$:
\begin{eqnarray}
\frac{s_{n0}}{s_{nt}} & < & \frac{\pi\omega^{n}(1+O(\omega^{2n}))}{(2/\pi)2\pi\omega^{n}t\left(\sqrt{5}-\frac{1}{t}\left(\{t\omega\}-\frac{1}{2}\right)+O(\omega^{2n})\right)}\nonumber \\
 & < & \frac{\pi(1+O(\omega^{2n}))}{4q\left(\sqrt{5}-q^{-1}\left(\{t\omega\}-\frac{1}{2}\right)+O(\omega^{2n})\right)}\nonumber \\
 & < & \frac{\pi(1+O(q^{-1}))}{4\sqrt{5}q}\nonumber \\
 & = & O(q^{-1})
\end{eqnarray}

Now choose $q\le q_{1}<q_{2}\le F_{n}/2$. We can now use from Lemma
\ref{lem:ProdBounds} $\prod(1-a_{n})>1-\sum\left|a_{n}\right|$ to
obtain:
\begin{eqnarray}
1>\prod_{t=q_{1}}^{q_{2}}\left(1-\frac{s_{n0}^{2}}{s_{nt}^{2}}\right) & > & \prod_{t=q_{1}}^{F_{n}/2}\left(1-\frac{s_{n0}^{2}}{s_{nt}^{2}}\right)\nonumber \\
 & > & 1-\sum_{t=q_{1}}^{F_{n}/2}O(q^{-2})>1-F_{n}O(\omega^{6n/5})\nonumber \\
 & = & 1-O(\omega^{n/5})
\end{eqnarray}

Now we consider the case of $t<q$. From \eqref{eq:sntGenEst} we
have $s_{nt}=2\sin\pi\omega^{n}t\left(\sqrt{5}-\left(\{t\omega\}-\frac{1}{2}\right)/t+O(\omega^{2n})\right)$,
and the largest term in the argument of the sine function is then
$O(\omega^{n}q)=O(\omega^{2n/5})$, so that for large enough $n$
we can make the argument as small as we like. So we can use $\sin x=x+O(x^{3})$
to give us:
\begin{eqnarray}
s_{nt} & = & 2\pi\omega^{n}t\left(\sqrt{5}-\frac{1}{t}\left(\{t\omega\}-\frac{1}{2}\right)+O(\omega^{2n})\right)+O(\omega^{6n/5})\nonumber \\
 & = & 2\sqrt{5}\pi\omega^{n}\left(t-\frac{1}{\sqrt{5}}\left(\{t\omega\}-\frac{1}{2}\right)+O(\omega^{n/5})\right)
\end{eqnarray}

We put $u_{t}=2\sqrt{5}\left(t-\frac{1}{\sqrt{5}}\left(\{t\omega\}-\frac{1}{2}\right)\right)$.
Using \eqref{eq:sn0} we get:

\begin{eqnarray}
\frac{s_{n0}}{s_{nt}} & = & \frac{(1+O(\omega^{n/5}))}{u_{t}}
\end{eqnarray}

Hence we can write:
\begin{eqnarray}
\prod_{t=1}^{q}\left(1-\frac{s_{n0}^{2}}{s_{nt}^{2}}\right) & = & \prod_{t=1}^{q}\left(1-\frac{1}{u_{t}^{2}}-\frac{O(\omega^{n/5})}{u_{t}^{2}}\right)\nonumber \\
 & = & \prod_{t=1}^{q}\left(1-\frac{1}{u_{t}^{2}}\right)\;\prod_{t=1}^{q}\left(1-\frac{O(\omega^{n/5})}{u_{t}^{2}-1}\right)
\end{eqnarray}

Now $\sum1/(u_{t}^{2}-1)$ converges (by comparison with $\sum1/t^{2}=\pi^{2}/6$)
and so $\sum\frac{O(\omega^{n/5})}{u_{t}^{2}-1}=O(\omega^{n/5})$,
so by Lemma \ref{lem:ProdBounds}: 
\begin{equation}
\prod_{t=1}^{q}\left(1-\frac{O(\omega^{n/5}}{u_{t}^{2}-1}\right)=1+O(\omega^{n/5})
\end{equation}
Similarly $\sum1/u_{t}^{2}$ also converges, but for this series we
need more information about the limit which we obtain as follows:
 
\begin{eqnarray}
\sum_{t=1}^{\infty}\frac{1}{u_{t}^{2}} & < & \frac{1}{u_{1}^{2}}+\sum_{t=2}^{\infty}\frac{1}{20(t-1)^{2}}\nonumber \\
 & < & 0.056+\pi^{2}/120\nonumber \\
 & < & 0.138
\end{eqnarray}
We now put $U_{q}=\prod_{t=1}^{q}\left(1-\frac{1}{u_{t}^{2}}\right)>1-\sum_{t=1}^{q}1/u_{t}^{2}>0.862$.
Note that $U_{q}$ is a descending sequence and bounded below, and
so converges to some constant $U_{\infty}>0.862$. (In fact we compute
$U_{\infty}\simeq0.928$). And 
\begin{eqnarray*}
1>\frac{U_{\infty}}{U_{q}} & = & \prod_{t=q+1}^{\infty}\left(1-\frac{1}{u_{t}^{2}}\right)>1-\frac{1}{20}\sum_{t=q+1}^{\infty}\frac{1}{(t-1)^{2}}\\
 & = & 1-O(q^{-1})
\end{eqnarray*}

Finally:
\begin{eqnarray*}
C_{n} & = & \prod_{t=1}^{F_{n/2}}\left(1-\frac{s_{n0}^{2}}{s_{nt}^{2}}\right)=\prod_{t=1}^{q}\left(1-\frac{s_{n0}^{2}}{s_{nt}^{2}}\right)\prod_{t=q+1}^{F_{n}/2}\left(1-\frac{s_{n0}^{2}}{s_{nt}^{2}}\right)\\
 & = & U_{\infty}\left(1+O(q^{-1})\right)\left(1+O(\omega^{n/5})\right)\left(1-O(\omega^{n/5})\right)\\
 & = & U_{\infty}\left(1+O(\omega^{n/5})\right)
\end{eqnarray*}

Hence 
\begin{equation}
\lim_{n\rightarrow\infty}C_{n}=U_{\infty}=\prod_{t=1}^{\infty}\left(1-\frac{1}{20\left(t-\frac{1}{\sqrt{5}}\left(\{t\omega\}-\frac{1}{2}\right)\right)^{2}}\right)^{2}\simeq0.928
\end{equation}

\section{\label{sec:EndMain}The Convergence of  $B_{n}=\prod_{t=1}^{F_{n}-1}s_{nt}/\left(2\sin\pi t/F_{n}\right)$}

In this step we show that $B_{n}$ converges to a strictly positive
limit. In the last section we saw that the proof of convergence was
complicated by the presence of non-alternating oscillations in sign.
We were able to circumvent this problem by relating the product to
one which converged absolutely, and involved a product of square terms
$\prod(1-1/r^{2})$. In this section we are unable to do this as the
absolute product behaves like $\prod(1+1/r)$ and diverges. The convergence
is therefore conditional and we are forced to estimate the compound
effects of the signed differences. 
\begin{thm}
\label{thm:Bn}The sequence $\log B_{n}$ converges to a finite limit,
and the sequence $B_{n}$ to a strictly positive limit
\end{thm}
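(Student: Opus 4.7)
The plan is to prove that $\log B_n$ has a finite real limit, so that $B_n$ converges to a strictly positive number.

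First I would use the identity $\sin(A-B)/\sin A=\cos B-\cot A\sin B$ with $A=\pi t/F_n$ and $B=\pi\omega^n\xi_{nt}$ to rewrite each factor of $B_n$ as $\cos(\pi\omega^n\xi_{nt})-h_{nt}$. Taking logs splits off a cosine piece $\sum_{t=1}^{F_n-1}\log\cos(\pi\omega^n\xi_{nt})$, which since $|\xi_{nt}|\le 1/2$ and $F_n\omega^n=O(1)$ is $O(F_n\omega^{2n})=O(\omega^n)\to 0$. So the task reduces to convergence of $\sum_{t=1}^{F_n-1}\log(1-h_{nt}(1+O(\omega^{2n})))$. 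Taylor expanding $\log(1-x)=-x-x^2/2-\cdots$ and using the even symmetry $h_{n,F_n-t}=h_{nt}$ from Lemma \ref{lem:baseSeq} together with $\csc(\pi t/F_n)\le F_n/(2t)$ for $1\le t\le F_n/2$, one has $|h_{nt}|\le cF_n\omega^n/t\le c'/t$ with $n$-independent constants. Hence $\sum h_{nt}^2$ and all higher-power sums converge absolutely; the pointwise limit $h_{nt}\to\xi_{\infty t}/(\sqrt{5}t)$ from Lemma \ref{lem:baseSeq}(5) together with dominated convergence identifies each of these limits as finite numbers. This reduces the problem to controlling the linear sum $T_n:=\sum_{t=1}^{F_n-1}h_{nt}$.

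To handle $T_n$, where absolute estimation yields only $O(\log F_n)$, I would introduce a cutoff $q=q(n)$ with $\log F_n\ll q\ll F_n^{1/2}$ (for instance $q=\lfloor\log^2 F_n\rfloor$). On the head $1\le t\le q$, Laurent-expanding $\cot(\pi t/F_n)$ about $0$ and using $\xi_{nt}=\xi_{\infty t}+O(\omega^n)$ gives $h_{nt}=\xi_{\infty t}/(\sqrt{5}t)+O(\omega^n/t+t/F_n)$, so the partial sum tends, letting $n\to\infty$ and then $q\to\infty$, to the conditionally convergent series $\sum_{t=1}^\infty\xi_{\infty t}/(\sqrt{5}t)$. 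On the tail $q<t\le F_n/2$ (with the mirror range handled by the symmetry of $h_{nt}$), I would apply Abel summation, pairing $\xi_{nt}$ against $g(t)=\cot(\pi t/F_n)$. The essential input is a uniform discrepancy bound $\left|\sum_{t=1}^T\xi_{nt}\right|=O(\log F_n)$ valid for all $1\le T\le F_n$, which the three-distance theorem applied to the rational rotation $F_{n-1}/F_n$ supplies via the Ostrowski/Zeckendorf machinery introduced in Section \ref{sub:Fibonacci-representation} and Lemma \ref{lem:FLest}. Combined with the derivative bound $|g(t+1)-g(t)|\le cF_n/t^2$, Abel summation then estimates the tail by $O(\omega^n\log F_n\cdot F_n/q)=O(\log F_n/q)\to 0$.

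The substantive difficulty is precisely this conditional convergence of $T_n$: the naive absolute bound diverges like $\log F_n$, and the argument hinges on a uniform Fibonacci-discrepancy estimate for the permutation $t\mapsto[tF_{n-1}]\bmod F_n$, together with the decay of $g(t+1)-g(t)$. Once that estimate is in hand, combining the cosine estimate, the absolutely convergent quadratic/cubic tails (via dominated convergence), the head limit $\sum_{t\ge 1}\xi_{\infty t}/(\sqrt{5}t)$, and the vanishing of the Abel tail yields $\log B_n\to L$ for some finite real $L$, and hence $B_n\to e^L>0$, as required.
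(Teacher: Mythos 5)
Your architecture is essentially the paper's: the same factorisation $s_{nt}/(2\sin\pi t/F_n)=\cos\pi\omega^{n}\xi_{nt}-h_{nt}$, the same disposal of the cosine and of the quadratic-and-higher terms by absolute convergence using $|h_{nt}|\le c/t$ (cf.\ \eqref{eq:hntEst1}), and the same reduction of the whole theorem to the linear sum $\sum_t h_{nt}$, treated by Abel summation against cotangent differences of size $O(F_n/t^{2})$ (cf.\ \eqref{eq:CotDiffEst}). Your variations --- keeping $\xi_{nt}$ instead of passing to $\xi_{\infty t}$, inserting a cutoff $q\sim\log^{2}F_n$, and identifying the head limit as $\sum_{t\ge1}\xi_{\infty t}/(\sqrt5\,t)$ --- are workable but not substantively different.

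The gap sits exactly where the paper has to do its real work: the uniform bound $\bigl|\sum_{t=1}^{T}\xi_{nt}\bigr|=O(\log F_n)$ for all $1\le T\le F_n$, which you assert is ``supplied'' by the three-distance theorem together with Section \ref{sub:Fibonacci-representation} and Lemma \ref{lem:FLest}. Neither supplies it: the Zeckendorf material gives only the representation and the length bound $F_L(T)=O(\log T)$, and the three-distance theorem describes the gap structure of the orbit, not the signed sum of $\{tF_{n-1}/F_n\}-\tfrac12$; converting gap structure into a discrepancy-sum bound requires a further argument (Koksma's inequality with $D_T=O(\log T/T)$ for bounded-type rotations, or an Ostrowski block decomposition). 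Moreover, once $\sum_{t\le T}$ is split into Fibonacci blocks, each block is a sum of $\{\theta+i\alpha\}-\tfrac12$ with its own starting phase $\theta$, so you need a per-block estimate \emph{uniform in the phase}; this is precisely the content of the paper's standalone lemma in section \ref{sub:SumSt} (that $\bigl|\sum_{i=1}^{q}(\{\theta+i\alpha\}-\tfrac12)\bigr|<\tfrac32$ for every $\theta$ whenever $p/q$ is a convergent of $\alpha$), which is proved there from first principles exactly because no off-the-shelf statement in the needed form exists. The same remark applies to your head term: convergence of $\sum_{t\ge1}\xi_{\infty t}/(\sqrt5\,t)$ is itself conditional and needs the same discrepancy input plus Abel summation, but you assert it. The facts you invoke are true (the golden rotation is of bounded type), so the gap is fillable, but as written the crux of the proof is assumed rather than proved.
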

We start by examining each term for $1\le t\le F_{n}-1$: 
\begin{eqnarray}
\frac{s_{nt}}{2\sin\pi t/F_{n}} & = & \frac{2\sin\pi(t/F_{n}-\omega^{n}\xi_{nt})}{2\sin\pi t/F_{n}}\nonumber \\
 & = & \cos\pi\omega^{n}\xi_{nt}-\cot\frac{\pi t}{F_{n}}\sin\pi\omega^{n}\xi_{nt}\nonumber \\
 & = & 1-2\sin^{2}\frac{\pi}{2}\omega^{n}\xi_{nt}-\cot\frac{\pi t}{F_{n}}\sin\pi\omega^{n}\xi_{nt}
\end{eqnarray}
Put $\alpha_{nt}=2\sin^{2}\frac{1}{2}\pi\left(\omega^{n}\xi_{nt}\right)$
and $h_{nt}=\cot\pi\left(\frac{t}{F_{n}}\right)\sin\pi\left(\omega^{n}\xi_{nt}\right)$
so that $B_{n}=\prod_{t=1}^{F_{n}-1}(1-\alpha_{nt}-h_{nt})$.

We first need an estimate for $h_{nt}$. Using $\cot x<1/x$ in $(0,\pi/2)$
and $\left|\xi_{nt}\right|<1/2$ gives us:

\begin{eqnarray}
\left|h_{nt}\right| & = & \cot\left(\frac{\pi t}{F_{n}}\right)\left|\sin\pi\left(\omega^{n}\xi_{nt}\right)\right|\nonumber \\
 & < & \frac{F_{n}}{\pi t}.\pi\omega^{n}\left|\xi_{nt}\right|\nonumber \\
 & < & \frac{1}{2\sqrt{5}t}\left(1-(-1)^{n}\omega^{2n}\right)\nonumber \\
 & < & \frac{1}{4t}\textrm{ for any }1\le t\le F_{n}/2\label{eq:hntEst1}
\end{eqnarray}

Also since $\left|\xi_{nt}\right|<1/2$ we have $0<\alpha_{nt}<\pi^{2}\omega^{2n}/8$.
Consequently $\log(1-\alpha_{nt}-h_{nt})=\log(1-h_{nt})+O(\omega^{2n})$
and we can sum over $t$ to obtain: 

\begin{eqnarray}
\left|\log B_{n}-\sum_{t=1}^{F_{n}-1}\log(1-h_{nt})\right| & = & O(\omega^{n})
\end{eqnarray}

Writing $B_{n}^{*}=\prod_{t=1}^{F_{n}-1}(1-h_{nt})$, this gives us:
\begin{equation}
B_{n}\sim B_{n}^{*}\label{eq:Bn}
\end{equation}

We proceed to investigate the product $B_{n}^{*}$. We start by observing:
\begin{equation}
\log B_{n}^{*}=\sum_{t=1}^{F_{n}-1}\log(1-h_{nt})=-\sum_{t=1}^{F_{n}-1}\sum_{k=1}^{\infty}\frac{1}{k}h_{nt}^{k}
\end{equation}

Using (from Lemma \ref{lem:baseSeq}) the symmetry $h_{nt}=h_{n(F_{n}-t)}$,
we obtain:
\begin{eqnarray}
\log B_{n}^{*} & = & -2\sum_{t=1}^{F_{n}/2}\sum_{k=1}^{\infty}\frac{1}{k}h_{nt}^{k}=-2\left(\sum_{t=1}^{F_{n}/2}h_{nt}+\sum_{t=1}^{F_{n}/2}\sum_{k=2}^{\infty}\frac{1}{k}h_{nt}^{k}\right)\label{eq:Bn*}
\end{eqnarray}

\subsection{Convergence of $\sum_{t=1}^{F_{n}/2}\sum_{k=2}^{\infty}\frac{1}{k}h_{nt}^{k}$}

Examining the right hand sum we find we can immediately take limits,
using \eqref{eq:hntEst1}: 
\begin{eqnarray*}
\lim_{n\rightarrow\infty}\sum_{t=1}^{F_{n}/2}\sum_{k=2}^{\infty}\frac{1}{k}\left|h_{nt}^{k}\right| & = & \sum_{t=1}^{\infty}\sum_{k=2}^{\infty}\frac{1}{k}\left|h_{nt}^{k}\right|<\sum_{t=1}^{\infty}\frac{h_{nt}^{2}}{1-\left|h_{nt}\right|}<\frac{\pi^{2}}{72}
\end{eqnarray*}

Hence the sum above is absolutely convergent, and hence convergent
to a limit we denote $L_{2}^{B}$, ie:
\begin{equation}
\lim_{n\rightarrow\infty}\sum_{t=1}^{F_{n}/2}\sum_{k=2}^{\infty}\frac{1}{k}h_{nt}^{k}=L_{2}^{B}\label{eq:LB2}
\end{equation}

\subsection{Convergence of $\sum_{t=1}^{F_{n}/2}h_{nt}$}

We are left in \eqref{eq:Bn*} with estimating the first sum $\sum_{t=1}^{F_{n}/2}h_{nt}$.
Our estimate of $\left|h_{nt}\right|<1/4t$ is not good enough to
help us here as its sum is the divergent harmonic series.

Put $H_{n}=\sum_{t=1}^{F_{n}/2}h_{nt}=\sum_{t=1}^{F_{n}/2}\cot\left(\frac{\pi t}{F_{n}}\right)\sin\pi\left(\omega^{n}\xi_{nt}\right)$
and $H_{n}^{*}=\sum_{t=1}^{F_{n}/2}\cot\left(\frac{\pi t}{F_{n}}\right)\sin\pi\left(\omega^{n}\xi_{\infty t}\right)$
(where in $H_{n}^{*}$we have simply replaced $\xi_{nt}$ with $\xi_{\infty t}$).
Note that for $1\le t\le F_{n}-1$ we have $\xi_{nt}-\xi_{\infty t}=t(-\omega)^{n}/F_{n}$
so that
\begin{equation}
H_{n}-H_{n}^{*}=\sum_{t=1}^{F_{n}/2}\cot\left(\frac{\pi t}{F_{n}}\right)\pi\omega^{n}\frac{t(-\omega)^{n}}{F_{n}}\left(1+O(\omega^{2n})\right)
\end{equation}

Note that for $x\in(0,\pi/2]$ we have $\cot x<x^{-1}$ and so
\begin{eqnarray}
\left|H_{n}-H_{n}^{*}\right| & < & \sum_{t=1}^{F_{n}/2}\left(\frac{\pi t}{F_{n}}\right)^{-1}\pi\omega^{n}\frac{t(\omega)^{n}}{F_{n}}\left(1+O(\omega^{2n})\right)=\omega^{2n}\sum_{t=1}^{F_{n}/2}\left(1+O(\omega^{2n})\right)\nonumber \\
 & = & \frac{\omega^{n}}{\sqrt{5}}+O(\omega^{3n})\label{eq:HnLimit}
\end{eqnarray}

so that $H_{n}-H_{n}^{*}\rightarrow0$. We now focus on $H_{n}^{*}$.
For the next step we will need to revert to summation using integer
limits. To do this note that if $F_{n}$ is even then $h_{F_{n}/2}=\cot\pi/2\,\sin\pi\left(\omega^{n}\xi_{F_{n}/2}\right)=0$
so we can ignore this term. So now we can put $M_{n}=\left\lfloor (F_{n}-1)/2\right\rfloor $
and use summation by parts to obtain:
\begin{eqnarray}
H_{n}^{*} & = & \sum_{t=1}^{M_{n}}\cot\left(\frac{\pi t}{F_{n}}\right)\sin\pi\left(\omega^{n}\xi_{\infty t}\right)\nonumber \\
 & = & \sum_{t=1}^{M_{n}-1}\left(\cot\left(\frac{\pi t}{F_{n}}\right)-\cot\left(\frac{\pi(t+1)}{F_{n}}\right)\right)\sum_{s=1}^{t}\sin\pi\left(\omega^{n}\xi_{\omega s}\right)\;+\cot\left(\frac{\pi M_{n}}{F_{n}}\right)\sin\pi\left(\omega^{n}\xi_{\omega M_{n}}\right)\label{eq:Hn*}
\end{eqnarray}

Recalling $\left|\xi_{\infty t}\right|<1/2$, the trailing term is
easily estimated as:

\begin{equation}
\left|\cot\left(\frac{\pi M_{n}}{F_{n}}\right)\sin\pi\left(\omega^{n}\xi_{\omega M_{n}}\right)\right|<\left(\frac{\pi}{2F_{n}}\right)\left(\frac{\pi\omega^{n}}{2}\right)+O(\omega^{4n})=O(\omega^{2n})\label{eq:T1}
\end{equation}

We can now take limits on \eqref{eq:Hn*} to obtain, writing $\alpha=\pi/F_{n}$,
$C_{nt}=\cot t\alpha-\cot(t+1)\alpha$ and $S_{nt}=\sum_{s=1}^{t}\sin\pi\left(\omega^{n}\xi_{\omega s}\right)$:
\begin{equation}
\lim_{n\rightarrow\infty}H_{n}^{*}=\lim_{n\rightarrow\infty}\sum_{t=1}^{M_{n}-1}C_{nt}S_{nt}\label{eq:Hn*Lim}
\end{equation}

\subsubsection{The order of the cotangent difference}

We estimate the cotangent difference as follows:
\begin{eqnarray}
0<C_{nt}=\cot t\alpha-\cot(t+1)\alpha & = & \frac{\sin(t+1)\alpha\cos t\alpha-\cos(t+1)\alpha\sin t\alpha}{\sin t\alpha\sin(t+1)\alpha}\nonumber \\
 & = & \frac{2\sin\alpha}{\cos\alpha-\cos(2t+1)\alpha}\nonumber \\
 & = & \frac{\sin\alpha}{\sin^{2}(t+\frac{1}{2})\alpha-\sin^{2}\alpha}
\end{eqnarray}

Expanding $\alpha$, and noting from Lemma \ref{lem:ConvexSine} $(\pi/2)\sin x>x$
for $x\in(0,\pi/2)$ we get:
\begin{eqnarray}
0<C_{nt} & < & \frac{\pi F_{n}(1+O(F_{n}^{-2})}{(2t+1)^{2}-1}\nonumber \\
 & < & \frac{\pi F_{n}(1+O(F_{n}^{-2})}{2t^{2}}\label{eq:CotDiffEst}
\end{eqnarray}

\subsubsection{\label{sub:SumSt}The order of the partial sums $S_{nt}=\sum_{s=1}^{t}\sin\pi\left(\omega^{n}\xi_{\omega s}\right)$ }

\begin{figure}
\noindent \centering{}\includegraphics[scale=0.5]{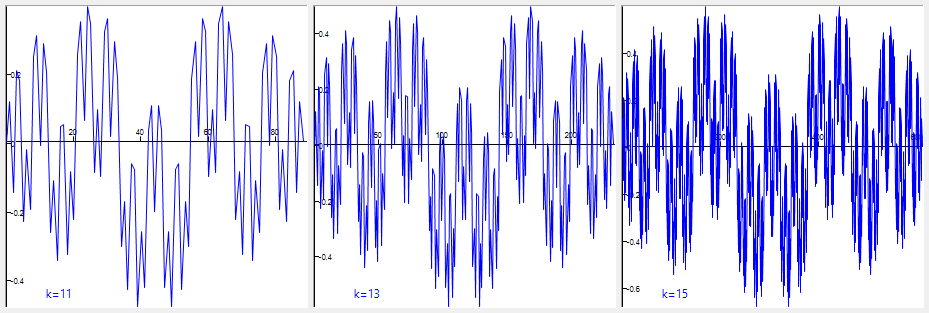}\protect\caption{Renormalised graph of $\left(S_{kt}/\omega^{n}\right)$ against $t$
for $t=0...F_{k}$. Note the slowly growing norm.}
\end{figure}

In this step we establish an estimate for $S_{nt}$ in terms of $t$
and $n$. However we will also need introduce a generalised $S_{nt}(\theta)$
in order to accommodate a dependency on a starting phase angle $\theta$.
Our basic approach will be to find an estimate for $S_{nF_{k}}(\theta)$
and then express $S_{nt}(0)=S_{nt}$ as a sum of terms involving $S_{nF_{k}}(\theta)$.

Recall from \ref{def:FibSumLength} that we can represent $t\ge1$
as a Fibonacci sum $t=\sum_{s=1}^{m}b_{s}F_{s}$ where $m(t)$ is
the largest integer such that $F_{m}\le t$. Define $t_{m}=0$, and
for $0\le s\le m-1$ define $t_{s}=t_{s+1}+b_{s+1}F_{s+1}=\sum_{u=s+1}^{m}b_{u}F_{u}$
so that $t_{0}=t$.

For $1\le r\le F_{n}-1$ we now introduce a generalised $\xi_{\omega r}(\theta)=\{\theta+r\omega\}-1/2$
so that our $\xi_{\infty t}$ of the previous section is now represented
by $\xi_{\infty t}(0)$. We can now use the Fibonacci representation
of $t$ to split the sum $S_{nt}$ into segments of length $b_{s}F_{s}$
: 

\begin{equation}
S_{nt}=\sum_{r=1}^{t}\sin\pi\omega^{n}\xi_{\omega s}(0)=\sum_{s=1}^{m}\sum_{r=1}^{b_{s}F_{s}}\sin\pi\omega^{n}\xi_{\omega r}\left(t_{s}\omega\right)
\end{equation}
.

We now introduce a generalised $S_{nt}(\theta)=\sum_{r=1}^{t}\sin\pi\left(\omega^{n}\xi_{\omega r}(\theta)\right)$
which allows us to write for $1\le t\le F_{n}-1$ 
\begin{equation}
S_{nt}=S_{nt}(0)=\sum_{s=1}^{m(t)}b_{s}S_{nF_{s}}(t_{s}\omega)\label{eq:Sr}
\end{equation}
We proceed to study the order of the terms $S_{nF_{s}}(\theta)$.

\begin{lem}
Let $p/q$ be a convergent of any real $\alpha$. Then for any real
$\theta$ 
\[
\left|\sum_{i=1}^{q}\left(\{\theta+i\alpha\}-\frac{1}{2}\right)\right|<\frac{3}{2}
\]
\end{lem}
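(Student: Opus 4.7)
I would prove this by comparing the irrational Birkhoff sum against the rational-rotation analogue at the convergent $p/q$, where equispacing makes the sum explicit, and then controlling the small perturbation. Setting $\varepsilon := q\alpha - p$, the defining convergent property $|\alpha - p/q|<1/q^2$ gives $|\varepsilon|<1/q$. Since $\gcd(p,q)=1$, the map $i\mapsto[ip]$ is a bijection $\{1,\ldots,q\}\to\{0,1,\ldots,q-1\}$; writing $i_j$ for the preimage of $j$, the substitution
\[
\{\theta+i_j\alpha\}=\{\theta+j/q+\tau_j\}, \qquad \tau_j:=i_j\varepsilon/q, \quad |\tau_j|\le|\varepsilon|<1/q,
\]
reindexes the sum so that the $q$ unperturbed points $\{\theta+j/q\}$ lie on the $1/q$-spaced grid $\{\eta+k/q:k=0,\ldots,q-1\}$, where $\eta:=\{q\theta\}/q\in[0,1/q)$.

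With this in hand, the rational part is elementary: $\sum_{j=0}^{q-1}(\{\theta+j/q\}-1/2)=-1/2+q\eta$, which lies in $[-1/2,1/2)$. For the perturbation, the integer carry $\delta_j:=\lfloor\theta+j/q+\tau_j\rfloor-\lfloor\theta+j/q\rfloor$ takes values in $\{-1,0,1\}$ (since $|\tau_j|<1$), and the target sum decomposes as
\[
S = \left(-\tfrac{1}{2}+q\eta\right)+\tfrac{\varepsilon(q+1)}{2}-D, \qquad D:=\sum_{j=0}^{q-1}\delta_j,
\]
with $|\varepsilon(q+1)/2|<(q+1)/(2q)\le 1$. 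The next step is to bound $|D|\le 1$: for $\varepsilon>0$, $\delta_j=1$ forces $\{\theta+j/q\}\ge 1-\tau_j>1-1/q$, and on the $1/q$-spaced grid only the single index $j^{*}$ with $\{\theta+j/q\}=\eta+(q-1)/q$ can satisfy this, so $D\in\{0,1\}$; the case $\varepsilon<0$ is symmetric and gives $D\in\{-1,0\}$.

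The main obstacle, which I would save for last, is that a naive triangle inequality gives only $|S|\le 1/2+1+1=5/2$, which is too weak. A short case analysis completes the proof by showing the three contributions cannot simultaneously be near their extremes. When $D=0$ one has $|S|<1/2+(q+1)/(2q)\le 3/2$ directly. When $D=1$, the triggering condition $i_{j^{*}}\varepsilon/q\ge 1/q-\eta$ together with $i_{j^{*}}\le q$ forces $q\eta\ge 1-q\varepsilon>0$, so $S>-3/2+(1-q\varepsilon)=-1/2-q\varepsilon>-3/2$, while $S<-3/2+1+(q+1)/(2q)\le 1/2$ is automatic. The case $D=-1$ is symmetric. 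Since every inequality used above is strict, the bound $|S|<3/2$ follows in all cases.
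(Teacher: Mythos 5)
Your proof is correct and takes essentially the same route as the paper's: both exploit $\gcd(p,q)=1$ to reindex the sum by the residues $ip\bmod q$, compare the points $\{\theta+i\alpha\}$ with the $1/q$-spaced grid determined by the convergent, note that at most one index can wrap around, and bound the drift $\sum_i i\varepsilon/q=\varepsilon(q+1)/2$ by $1$. Your exact identity $S=(q\eta-\tfrac12)+\tfrac{\varepsilon(q+1)}{2}-D$ with the carry count $D\in\{-1,0,1\}$, followed by the case analysis on $D$, is just a tidier bookkeeping of the paper's one-sided estimates (which treat $\nu\ge 0$ and $\nu<0$ separately, allowing one exceptional residue, and then combine the three bounds).
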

\begin{proof}
We can assume without loss of generality that $\alpha,\theta\in[0,1)$.
Since $p/q$ is a convergent of $\alpha$ we have $\alpha-p/q=\nu/q^{2}$
for some $\left|\nu\right|<1$.

Now $\theta=k/q+\phi$ for some $0\le k<q$ and $0\le\phi<1/q$, and
so $\theta+i\alpha=(k+ip)/q+\phi+i\nu/q^{2}$.

Suppose $\nu\ge0$, then for $1\le i\le q$ we have $ $$(k+ip)/q\le\theta+i\alpha<(k+ip+2)/q$
and so $ $$\left\{ (k+ip)/q\right\} \le\left\{ \theta+i\alpha\right\} $
with the one exception that when $k+ip\equiv-1\bmod q$ we may have
$\phi+i\nu/q^{2}\ge1/q$ and then we can only write $\left\{ (k+ip)/q\right\} -(q-1)/q\le\left\{ \theta+i\alpha\right\} $.

Now $(p,q)=1$, and so as $i$ runs through $1,\ldots,q$, $k+ip$
runs through a complete set of residues $0,\ldots,q-1$ $\bmod q$
, and hence
\begin{equation}
\sum_{i=1}^{q}\{\theta+i\alpha\}\ge\left(\sum_{j=0}^{q-1}\frac{j}{q}\right)-\frac{q-1}{q}=\frac{1}{2}(q-1)-\frac{q-1}{q}\label{eq:sumLowerBound}
\end{equation}

Similarly for $\nu<0$ we have $(k+ip-1)/q<\theta+i\alpha<(k+ip+1)/q$
and so $ $$\left\{ (k+ip-1)/q\right\} <\left\{ \theta+i\alpha\right\} $
with the one exception that when $k+ip-1\equiv-1\bmod q$ we may have
$\phi+i\nu/q^{2}\ge0$ and then we can only write $\left\{ (k+ip-1)/q\right\} -(q-1)/q\le\left\{ \theta+i\alpha\right\} $.
Now summing as before also gives \eqref{eq:sumLowerBound}, and so
this holds for any $\left|\nu\right|<1$. We can immediately deduce\foreignlanguage{english}{
\begin{equation}
\sum_{i=1}^{q}\left(\{\theta+i\alpha\}-\frac{1}{2}\right)>-\frac{3}{2}
\end{equation}
}

We now examine the upper bound of the sum. For $\nu\ge0$ we have
\begin{equation}
\sum_{i=1}^{q}\{\theta+i\alpha\}\le\sum_{j=0}^{q-1}\frac{j}{q}+\sum_{i=1}^{q}\left(\phi+\frac{i\nu}{q^{2}}\right)=\frac{1}{2}(q-1)+q\phi+\frac{1}{2}(q+1)\frac{\nu}{q}<\frac{1}{2}(q-1)+1+\frac{1}{2}(1+\frac{1}{q})=\frac{1}{2}q+1+\frac{1}{2q}
\end{equation}

whilst for $\nu<0$ we get 
\begin{equation}
\sum_{i=1}^{q}\{\theta+i\alpha\}\le\sum_{j=1}^{q}\frac{j}{q}+\sum_{i=1}^{q}\left(\frac{i\nu}{q^{2}}\right)=\frac{1}{2}(q+1)+\frac{1}{2}(q+1)\frac{\nu}{q}<\frac{1}{2}(q+1)
\end{equation}

By adding $\sum_{i=1}^{q}(-1/2)=-(1/2)q$ to the two upper bound inequalities,
the result follows by combining the three bounds obtained
\end{proof}
We now fix $n$, and use the lemma to estimate $S_{nF_{i}}$ for $1\le i<n$.
We can do this by noting that $F_{i-1}/F_{i}$ is a convergent to
$\omega$. 

Using $\sin x=x+O(x^{3})$ and from \eqref{eq:FnEst} $F_{i}\omega^{2n}\le F_{n}\omega^{2n}=O(\omega^{n})$,
we can now apply the lemma to estimate $S_{nF_{i}}(\theta)$ as follows:
\begin{eqnarray}
|S_{nF_{i}}(\theta)| & = & \left|\sum_{p=1}^{F_{i}}\sin\pi\omega^{n}\left(\{\theta+p\omega\}-\frac{1}{2}\right)\right|\nonumber \\
 & = & \pi\omega^{n}\left|\sum_{p=1}^{F_{i}}\left(\{\theta+p\omega\}-\frac{1}{2}+O(\omega^{2n})\right)\right|\nonumber \\
 & < & \pi\omega^{n}\left(\frac{3}{2}+O(\omega^{n})\right)
\end{eqnarray}

Note that the $O(\omega^{n})$ term has no dependency on $i$.

We are now in a position to estimate $S_{nt}$ for $1\le t\le F_{n}-1$,
using \eqref{eq:Sr} for $1\le t\le F_{n}-1$:

\begin{eqnarray*}
\left|S_{nt}(0)\right| & = & \left|\sum_{s=1}^{m(t)}b_{s}S_{nF_{s}}(t_{s}\omega)\right|<\sum_{s=1}^{m}b_{s}\pi\omega^{n}\left(\frac{3}{2}+O(\omega^{n})\right)\\
 & < & \pi\omega^{n}\left(\frac{3}{2}+O(\omega^{n})\right)\sum_{s=1}^{m}b_{s}
\end{eqnarray*}

Now $\sum_{s=1}^{m}b_{s}$ is the Fibonacci length of $t$ and by
Lemma \ref{lem:FLest} $\sum_{s=1}^{m}b_{s}\le\left\lfloor (\log t\,+1)/\log(2+\omega)\right\rfloor $
and so we have established:
\begin{lem}
\label{thm:OrderPartialSums} For $1\le t\le F_{n}-1$, the partial
sums $S_{nt}=\sum_{s=1}^{t}\sin\pi\left(\omega^{n}\xi_{\infty t}\right)$
satisfy

\begin{equation}
S_{nt}<\frac{3}{2}\pi\omega^{n}\left\lfloor (\log t\,+1)/\log(2+\omega)\right\rfloor +O(n\omega^{2n})
\end{equation}

In particular we can find a $K$ independent of $n$ such that $\left|S_{nt}\right|<K\omega^{n}(\log t\,+1)$
\end{lem}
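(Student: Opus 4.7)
The plan is to exploit the Zeckendorf decomposition $t=\sum_{s=1}^{m}b_{s}F_{s}$ to split $S_{nt}$ into a logarithmic number of blocks of Fibonacci length, each of which can be controlled uniformly because $F_{s-1}/F_{s}$ is a continued fraction convergent of $\omega$. Setting $t_{s}=\sum_{u=s+1}^{m}b_{u}F_{u}$ and using the generalised partial sum $S_{nF_{s}}(\theta)=\sum_{r=1}^{F_{s}}\sin\pi\omega^{n}\bigl(\{\theta+r\omega\}-\tfrac{1}{2}\bigr)$, the decomposition \eqref{eq:Sr} already established gives $S_{nt}=\sum_{s=1}^{m}b_{s}S_{nF_{s}}(t_{s}\omega)$, which reduces matters to bounding each block term uniformly in the phase $\theta$.

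To estimate $S_{nF_{s}}(\theta)$, I would linearise the sine using $\sin x=x+O(x^{3})$; since the argument is $O(\omega^{n})$ in absolute value, this yields
\[
S_{nF_{s}}(\theta)=\pi\omega^{n}\sum_{r=1}^{F_{s}}\Bigl(\{\theta+r\omega\}-\tfrac{1}{2}\Bigr)+O\bigl(F_{s}\omega^{3n}\bigr).
\]
The sum on the right is exactly the quantity controlled by the preceding \textbf{key lemma} with $(p,q,\alpha)=(F_{s-1},F_{s},\omega)$, which delivers the bound $3/2$ \emph{independent} of both $s$ and $\theta$. Hence $|S_{nF_{s}}(\theta)|<\pi\omega^{n}\bigl(3/2+O(\omega^{n})\bigr)$ uniformly.

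Summing over the Zeckendorf blocks and invoking Lemma \ref{lem:FLest} to bound the Fibonacci length $\sum_{s=1}^{m}b_{s}\le\lfloor(\log t+1)/\log(2+\omega)\rfloor$ produces the stated inequality, once the accumulated error is shown to fit into $O(n\omega^{2n})$. Since each block contributes a linearisation error $O(F_{s}\omega^{3n})$, and $F_{s}\le F_{n}=O(\omega^{-n})$ with at most $m=O(n)$ blocks (by the estimate on $m$ in Lemma \ref{lem:FLest}), the total error is $O(nF_{n}\omega^{3n})=O(n\omega^{2n})$, as required. The ``in particular'' clause then follows by absorbing this error into $K\omega^{n}(\log t+1)$ for a suitably large constant $K$ independent of $n$, noting that $\log t+1\ge 1$ and $n\omega^{2n}=o(\omega^{n})$.

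The main obstacle is keeping the error bookkeeping sharp: the cubic correction from the Taylor expansion of the sine, multiplied by the block length $F_{s}$, could a priori blow up, but it is rescued by the fact that $F_{s}\omega^{3n}\le F_{n}\omega^{3n}=O(\omega^{2n})$ together with the logarithmic (in fact $O(n)$) bound on the number of blocks. Once this is verified the rest is a clean summation by Zeckendorf length.
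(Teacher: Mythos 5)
Your proposal is correct and follows essentially the same route as the paper: Zeckendorf decomposition of $t$ into Fibonacci blocks via \eqref{eq:Sr}, the uniform bound $3/2$ from the convergent lemma applied with $q=F_{s}$, linearisation $\sin x=x+O(x^{3})$, and the Fibonacci-length bound of Lemma \ref{lem:FLest}, with the same error bookkeeping ($F_{s}\omega^{3n}=O(\omega^{2n})$ per block, $O(n)$ blocks) yielding $O(n\omega^{2n})$. No gaps to report.
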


\subsubsection{Conclusion of proof of convergence of the first sum}

From Theorem \ref{thm:OrderPartialSums} in section \ref{sub:SumSt}
we have $\left|S_{nt}\right|=\left|\sum_{s=1}^{t}\sin\pi\left(\omega^{n}\xi_{\omega s}\right)\right|<K\omega^{n}(\log t\,+1)$
for some $K$ independent of $n$. Combining this with \eqref{eq:CotDiffEst}
we get $\left|C_{nt}S_{nt}\right|<\pi K(1+O(\omega^{2n}))(\log t\,+1)/t^{2}$.
But $\sum(\log t\,+1)/t^{2}$ is absolutely convergent, so putting
$K_{2}=\pi K\sum_{1}^{\infty}(\log t\,+1)/t^{2}$ in \eqref{eq:Hn*Lim}
we get: 
\begin{equation}
\lim_{n\rightarrow\infty}\sum_{t=1}^{M_{n}-1}\left|\left(\cot\left(\frac{\pi t}{F_{n}}\right)-\cot\left(\frac{\pi(t+1)}{F_{n}}\right)\right)\sum_{s=1}^{t}\sin\pi\left(\omega^{n}\xi_{\omega s}\right)\right|\le K_{2}
\end{equation}

So the sum above is absolutely convergent, and hence converges to
a limit $L_{1}^{B}$. From \eqref{eq:HnLimit} and \eqref{eq:Hn*Lim}
this gives us: 
\begin{equation}
H_{n}\rightarrow H_{n}^{*}\longrightarrow L_{1}^{B}\label{eq:LB1}
\end{equation}

\subsection{Conclusion of proof of convergence of $B_{n}$}

Combining \eqref{eq:Bn}, \eqref{eq:Bn*}, \eqref{eq:LB2} and \eqref{eq:LB1}
and gives us finally
\begin{equation}
\log B_{n}\rightarrow-2\left(L_{1}^{B}+L_{2}^{B}\right)
\end{equation}

and noting that both limits are finite establishes Theorem \ref{thm:Bn}.

\section{\label{sec:Consequences}Two additional results}

In this section we show how the other two results of Theorem \ref{prop:K=000026T}
flow from our main result $P_{F_{n}}(\omega)\rightarrow c$.

The first result is really just a direct corollary of our main result.

\subsection{The convergence of $P_{F_{n}-1}(\omega)/F_{n}$}
\begin{cor}
\label{prop:PFn-1}The sequence $P_{F_{n}-1}(\omega)/F_{n}$ converges
to $c\sqrt{5}/2\pi$ where $c$ is the limit of the sequence $P_{F_{n}}(\omega)$ \end{cor}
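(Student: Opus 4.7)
The plan is to exploit the simple product relation
\[
P_{F_n}(\omega) \;=\; P_{F_n-1}(\omega)\,\bigl|2\sin\pi F_n\omega\bigr|,
\]
and show that the trailing factor $|2\sin\pi F_n\omega|$, when multiplied by $F_n$, converges to $2\pi/\sqrt{5}$. Once that is in hand, the corollary follows instantly: dividing the main-result limit $P_{F_n}(\omega)\to c$ by this factor gives $P_{F_n-1}(\omega)/F_n \to c\sqrt{5}/(2\pi)$.

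The key computation is to identify $F_n\,|2\sin\pi F_n\omega|$ with the factor $A_n$ that already appeared in the decomposition $Q_n = A_n B_n C_n$ of Lemma~\ref{lem:QnFactored}. Using the Fibonacci identity for $\omega$ (equation~\eqref{eq:wEstimate}, i.e.\ $F_n\omega = F_{n-1} - (-\omega)^n$), we get
\[
\sin\pi F_n\omega \;=\; \sin\pi\bigl(F_{n-1}-(-\omega)^n\bigr) \;=\; (-1)^{F_{n-1}+1}\sin\pi(-\omega)^n,
\]
so $|2\sin\pi F_n\omega| = 2\sin\pi\omega^n$ and therefore $F_n\,|2\sin\pi F_n\omega| = A_n$. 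The paper already established in \eqref{eq:QnDecomp} that $A_n\to 2\pi/\sqrt{5}$, using $\sin\pi\omega^n \sim \pi\omega^n$ together with Binet's formula $F_n\sim\omega^{-n}/\sqrt{5}$.

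Putting the pieces together, I would write
\[
\frac{P_{F_n-1}(\omega)}{F_n} \;=\; \frac{P_{F_n}(\omega)}{F_n\,|2\sin\pi F_n\omega|} \;=\; \frac{P_{F_n}(\omega)}{A_n},
\]
and take the limit of numerator and denominator separately: the numerator tends to $c$ by the main theorem, and the denominator tends to $2\pi/\sqrt{5}$, giving the claimed limit $c\sqrt{5}/(2\pi)$.

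There is no real obstacle here; the entire content is the algebraic observation that $F_n\,|2\sin\pi F_n\omega|$ is precisely the already-analysed quantity $A_n$, which is why this corollary is so short. The only mild subtlety is handling the sign correctly in the Fibonacci identity (making sure the absolute value collapses cleanly), but this is immediate.
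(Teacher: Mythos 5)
Your argument is correct and is essentially the paper's own proof: the paper also writes $P_{F_n-1}(\omega)=P_{F_n}(\omega)/2\sin\pi\omega^{n}\sim c/2\pi\omega^{n}$ (i.e.\ the same trailing-factor identity $|2\sin\pi F_n\omega|=2\sin\pi\omega^n$ coming from $F_n\omega=F_{n-1}-(-\omega)^n$) and then concludes with $F_n\sim\omega^{-n}/\sqrt{5}$. Your identification of $F_n\,|2\sin\pi F_n\omega|$ with $A_n$ is just a repackaging of that same computation, so there is nothing to add.
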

\begin{proof}
Since $P_{F_{n}-1}(\omega)=P_{F_{n}}(\omega)/2\sin\pi\omega^{n}\sim c/2\pi\omega^{n}$,
the result follows from $F_{n}\sim\omega^{-n}/\sqrt{5}$ 
\end{proof}

\subsection{The power law growth of $P_{k}(\omega)$ for general $k$}

We now turn to the more important result that the growth and decay
of $P_{k}(\omega)$ is bounded by power laws, specifically:
\begin{thm}
There are real constants $K_{1}\le0<1\le K_{2}$ independent of $k$
such that for $k\ge1$ we have $k^{K_{1}}\le P_{k}(\omega)\le k^{K_{2}}$ 
\end{thm}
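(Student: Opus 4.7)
The plan is to reduce the general $k$ case to the Fibonacci scale through the Zeckendorf (binary Fibonacci) representation of $k$, bound each Fibonacci-length block uniformly, and convert the resulting multiplicative bound into a power law by $L = O(\log k)$ from Lemma \ref{lem:FLest}. Write $k = \sum_{j=1}^L F_{n_j}$ with $n_1 > n_2 > \cdots > n_L$ and $n_j - n_{j+1} \ge 2$, and set $k_j = \sum_{i \le j} F_{n_i}$. Telescoping gives
\[
P_k(\omega) = \prod_{j=1}^L R_j, \qquad R_j := \prod_{t=1}^{F_{n_j}} \bigl| 2\sin\pi(k_{j-1}+t)\omega \bigr|.
\]
Iterating $F_m\omega = F_{m-1} - (-\omega)^m$ gives $k_{j-1}\omega = K_{j-1} + \delta_j$ for some $K_{j-1} \in \mathbb{Z}$ and $|\delta_j| = \bigl|\sum_{i<j}(-\omega)^{n_i}\bigr| \le \omega^{n_{j-1}}/(1-\omega^2) \le \omega^{n_j+2}/(1-\omega^2)$, using $n_{j-1} \ge n_j+2$. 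Hence each $R_j = \prod_{t=1}^{F_{n_j}}\bigl|2\sin\pi(t\omega+\delta_j)\bigr|$ is a small phase perturbation of $Q_{n_j} = P_{F_{n_j}}(\omega)$.

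The central claim will be that there exist absolute constants $0 < c_1 \le c_2$ with $c_1 \le R_j \le c_2$ for all $j$. Granted this, $c_1^L \le P_k(\omega) \le c_2^L$, and since $L \le (\log k + 1)/\log(2+\omega)$ by Lemma \ref{lem:FLest}, we recover $k^{K_1} \le P_k(\omega) \le k^{K_2}$ with $K_i = (\log c_i)/\log(2+\omega)$ (replacing $K_1$ by $\min(K_1,0)$ and $K_2$ by $\max(K_2,1)$ if necessary, the latter being forced by Corollary \ref{prop:PFn-1}, and absorbing finitely many small-$k$ values into the constants). By the main theorem, $Q_{n_j} \to c > 0$ is itself uniformly bounded in some $[c_{\min},c_{\max}]$; it therefore suffices to control the ratio $R_j/Q_{n_j}$.

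For this I would use the integral representation
\[
\log \frac{R_j}{Q_{n_j}} = \int_0^{\delta_j} \pi S_{n_j}(s)\, ds, \qquad S_n(s) := \sum_{t=1}^{F_n} \cot\pi(t\omega+s),
\]
arising from $\log\bigl|\sin\pi(t\omega+\delta)/\sin\pi t\omega\bigr| = \int_0^\delta \pi\cot\pi(t\omega+s)\,ds$, and establish the uniform bound $|S_{n_j}(s)| \le C F_{n_j}$ for $s \in [0,\delta_j]$. Substituting $\omega = F_{n-1}/F_n - (-\omega)^n/F_n$ and applying the classical identity $\sum_{u=0}^{N-1}\cot\pi(s+u/N) = N\cot\pi Ns$ (from logarithmic differentiation of $\prod_u\sin\pi(s+u/N) = 2^{1-N}\sin\pi Ns$) isolates the leading term $F_n\cot\pi F_n s$; since $|F_{n_j}s|\le F_{n_j}|\delta_j| = O(\omega^2)$ stays well away from integer multiples of $\pi$, this is $O(F_n)$. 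The correction from the substitution $F_{n-1}/F_n \mapsto \omega$ is controlled by the mean value theorem together with the identity $\sum_{t=1}^{N-1} t\csc^2(\pi t/N) = N(N^2-1)/6$, and is also $O(F_n)$. Consequently $\bigl|\log(R_j/Q_{n_j})\bigr| \le \pi C F_{n_j}|\delta_j| = O(\omega^2) = O(1)$ uniformly in $j$, closing the argument.

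The hard part is the uniform cotangent bound $|S_n(s)| = O(F_n)$. Bounding individual terms by $|\cot\pi t\omega| = O(1/\|t\omega\|)$ and summing over $t=1,\ldots,F_n$ only yields $O(F_n\log F_n)$, driven by the near-singularities at the Fibonacci-scale $t = F_{n-1}, F_{n-2}, \ldots$; this extra logarithmic factor, compounded over $L$ blocks, would give the non-polynomial bound $k^{O(\log k)}$. Recovering the sharper linear rate $F_n$ requires exploiting the full cancellation over a complete residue system via the Dedekind-type identity above, which is the technical heart of the argument.
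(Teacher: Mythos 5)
Your overall reduction is exactly the paper's: Zeckendorf decomposition of $k$, telescoping into Fibonacci-length blocks $\prod_{t=1}^{F_{n_j}}|2\sin\pi(t\omega+\delta_j)|$ with $|\delta_j|\le\omega^{n_j+1}$, a uniform two-sided bound on each block (the paper's Lemma \ref{lem:BoundedIC}), and $m=O(\log k)$ from Lemma \ref{lem:FLest} to convert $C_1^m\le P_k\le C_2^m$ into a power law; the endpoint normalisations $K_1\le 0$, $K_2\ge 1$ also match. Where you differ is in how the block lemma is proved: the paper expands each factor as $1-2\sin^2\tfrac{\pi\alpha}{2}+\cot(\pi r\omega)\sin\pi\alpha$, uses $x-x^2\le\log(1+x)\le x$, and reduces everything to the two estimates $\bigl|\sum_{r=1}^{F_n}\cot\pi r\omega\bigr|=O(\omega^{-n})$ and $\sum_{r=1}^{F_n}\cot^2\pi r\omega=O(\omega^{-2n})$, whereas you integrate over the phase and need the uniform bound $|S_n(s)|\le CF_n$ on the whole segment from $0$ to $\delta_j$, for $S_n(s)=\sum_{t=1}^{F_n}\cot\pi(t\omega+s)$. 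Both routes rest on the same cancellation over a complete residue system.

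However, your proposed proof of that uniform bound has a genuine gap. You claim the leading term $F_n\cot\pi F_n s$ produced by the identity $\sum_{u=0}^{N-1}\cot\pi(s+u/N)=N\cot\pi Ns$ is $O(F_n)$ because $F_n s$ ``stays well away from integer multiples of $\pi$''; but $s$ ranges over an interval with endpoint $0$, where $\pi F_n s$ approaches the integer multiple $0$ and the cotangent blows up like $1/(\pi s)$. Indeed $\int_0^{\delta_j}F_n\cot(\pi F_n s)\,ds$ diverges at the lower limit, so the leading term alone cannot be bounded as claimed, and consequently the mean-value-theorem ``correction'' cannot be $O(F_n)$ near $s=0$ either: the two pieces carry cancelling singularities, both coming from the wrap-around residue $u=0$, i.e.\ the term $t=F_n$, whose unperturbed counterpart in the identity is $\cot\pi s$ while the true term is $\cot\pi(F_n\omega+s)=\cot\pi(\mp\omega^n+s)$. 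The MVT interval for that single term passes arbitrarily close to an integer as $s\to 0$, so the $\csc^2$ bound there is unbounded. The fix is local but essential: split off the $t=F_n$ term and bound it directly by $O(\omega^{-n})$ (since $\|F_n\omega+s\|\ge\omega^{n+2}$ for $|s|\le\omega^{n+1}$), then apply the identity to the remaining residues $u=1,\dots,F_n-1$, for which $F_n\cot\pi F_n s-\cot\pi s=O(F_n^2|s|)=O(F_n)$ and the MVT correction is at most $\pi\omega^n\sum_{u\ge1}\csc^2(\pi u/F_n+O(\omega^n))=O(\omega^nF_n^2)=O(F_n)$. This singular-term bookkeeping is precisely what the paper does when it compares $\sum\cot\pi r\omega$ (and $\sum\cot^2\pi r\omega$) with the rational cotangent sums and treats the residues corresponding to $r=F_n$ and $r^*=F_{n-1}$ separately; without it, the central estimate of your argument does not go through as written.
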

The main part of the proof is to establish that these constants exist.
If they do then our main result ($P_{F_{n}}(\omega)\longrightarrow c$)
shows we must have $K_{1}\le0$, and Proposition \ref{prop:PFn-1}
($P_{F_{n}-1}(\omega)/F_{n}\longrightarrow c\sqrt{5}/2\pi$) shows
we must have $K_{2}\ge1$.

Knill and Tangerman provide an outline proof of existence in the logarithmic
case, but appear to make an assumption which, although correct, seems
to us to require its own proof. We will give the outline proof here,
and then complete it rigorously. 

Recall from section \ref{sub:Fibonacci-representation} that we can
express any integer $k\ge1$ as a sum of Fibonacci numbers $\sum_{s=1}^{m}b_{s}F_{s}$
subject to the rules ($b_{s}\in\{0,1\},b_{m}=1,\, b_{r+1}=1\Rightarrow b_{r}=0$).
For $0\le s\le m-1$ put $k_{s}=\sum_{u=s+1}^{m}b_{u}F_{u},\, k_{m}=0$
so that for $m>1$ we can split the overall product into sub-products
of length $b_{s}F_{s}$ (regarding the empty product as $1$) to get:
\begin{eqnarray}
P_{k}(\omega) & = & \prod_{r=1}^{k}|2\sin\pi(r\omega)|\nonumber \\
 & = & \prod_{r=1}^{b_{m}F_{m}}|2\sin\pi(r\omega)|\prod_{r=1}^{b_{m-1}F_{m-1}}|2\sin\pi(r\omega+b_{m}F_{m}\omega)|\prod_{r=1}^{b_{m-2}F_{m-2}}|2\sin\pi(r\omega+(b_{m}F_{m}+b_{m-1}F_{m-1})\omega)|\,...\nonumber \\
 & = & \prod_{s=1}^{m}\prod_{r=1}^{b_{s}F_{s}}|2\sin\pi(r\omega+k_{s}\omega)|\label{eq:pkw}
\end{eqnarray}

Now the term for $s=m$ of this product is $\prod_{r=1}^{b_{m}F_{m}}|2\sin\pi(r\omega)|=P_{F_{m}}(\omega)\sim c$
(by the main result of this paper), and it is also strictly positive,
so that we can find constants $0<C_{1}<C_{2}$ bounding $P_{F_{m}}(\omega)$
for all $m$. 
\begin{conjecture}
\label{conj:constantsExist}Assume that we can choose real constants
$C_{1},C_{2}$ with $0<C_{1}<C_{2}$ such that they bound all the
terms in \eqref{eq:pkw}, ie so that 
\begin{equation}
C_{1}\le\prod_{r=1}^{b_{s}F_{s}}|2\sin\pi(r\omega+k_{s}\omega)|\le C_{2}\label{eq:conjecture}
\end{equation}
 for each $1\le s\le m$. (Note that in order to bound empty products
this requires $C_{1}\le1\le C_{2}$). 
\end{conjecture}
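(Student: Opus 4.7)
The plan is to prove Conjecture \ref{conj:constantsExist} by establishing uniform bounds
\[
C_{1}\le\prod_{r=1}^{F_{s}}|2\sin\pi(r\omega+k_{s}\omega)|\le C_{2}
\]
valid for every $s\ge1$ with $b_{s}=1$ and every partial sum $k_{s}=\sum_{u\ge s+1}b_{u}F_{u}$ arising from a Zeckendorf decomposition (when $b_{s}=0$ the product is empty and trivially handled by $C_{1}\le1\le C_{2}$). The key structural observation is that the Zeckendorf no-two-consecutive rule forces $b_{s+1}=0$ whenever $b_{s}=1$, so only $u\ge s+2$ terms contribute to $k_{s}\omega$. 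Combining \eqref{eq:Fnw} in the form $F_{u}\omega=F_{u-1}+(-1)^{u+1}\omega^{u}$ with the Zeckendorf constraint and the identity $1-\omega^{2}=\omega$, the signed distance $\phi_{s}$ of $k_{s}\omega$ to the nearest integer satisfies
\[
\phi_{s}=\sum_{u\ge s+2}b_{u}(-1)^{u+1}\omega^{u},\qquad|\phi_{s}|\le\omega^{s+1}.
\]
This shift is an order of magnitude smaller than $\omega^{s-1}$, which by the three-distance theorem is the smallest gap of $\{r\omega\bmod 1:1\le r\le F_{s}-1\}$ from $0$.

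With this in hand, I would adapt the decomposition $Q_{n}=A_{n}B_{n}C_{n}$ of Lemma \ref{lem:QnFactored} to the shifted product $R_{s}(\phi):=\prod_{r=1}^{F_{s}}|2\sin\pi(r\omega+\phi)|$. Substituting $\omega=F_{s-1}/F_{s}-(-\omega)^{s}/F_{s}$ and setting $t=[rF_{s-1}]\bmod F_{s}$ gives
\[
R_{s}(\phi)=\prod_{t=0}^{F_{s}-1}\left|2\sin\pi\bigl(t/F_{s}+\phi-r(t)(-\omega)^{s}/F_{s}\bigr)\right|,
\]
where $r(t)=[(-1)^{s}F_{s-1}t]$ runs bijectively over $\{1,\dots,F_{s}\}$. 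Isolating the factor at $t=0$ (the shifted analog of $A_{s}$) gives $F_{s}\cdot|2\sin\pi(\phi_{s}+(-1)^{s+1}\omega^{s})|$; using $|\phi_{s}|\le\omega^{s+1}$ and $F_{s}\sim\omega^{-s}/\sqrt{5}$, this is bounded above and away from zero by universal constants. The remaining $F_{s}-1$ factors split naturally into analogs of $B_{s}$ and $C_{s}$ paralleling the definitions in Section \ref{sec:MainStart}, with arguments perturbed only by $\phi_{s}$ and by the same $r(t)(-\omega)^{s}/F_{s}$ terms already estimated in Section \ref{sec:Cn} and Section \ref{sec:EndMain}.

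The boundedness of the $C_{s}(\phi_{s})$ analog follows almost verbatim from the argument of Section \ref{sec:Cn}, since that estimate is robust to the $O(\omega^{s+1})$ shift in the $s_{nt}$ sequence. The principal obstacle will be the $B_{s}(\phi_{s})$ analog, whose unperturbed proof in Section \ref{sec:EndMain} relied on the symmetry $h_{n,F_{n}-t}=h_{nt}$ to halve the sum, a symmetry broken by a nonzero shift. Fortunately the critical Lemma \ref{thm:OrderPartialSums} was already established for an arbitrary starting phase $\theta$, delivering the bound $|S_{nt}(\theta)|<K\omega^{n}(\log t+1)$ uniformly in $\theta$; substituting this into an asymmetric summation-by-parts version of \eqref{eq:Hn*} should produce a uniform bound on $\log B_{s}(\phi_{s})$. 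Combining the bounds on the three factors of $R_{s}(\phi_{s})$ then yields the universal constants $C_{1}$ and $C_{2}$.
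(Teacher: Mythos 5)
Your reduction step coincides with the paper's: the Zeckendorf rule $b_{s}=1\Rightarrow b_{s+1}=0$ together with \eqref{eq:Fnw} gives $k_{s}\omega\equiv\phi_{s}\pmod 1$ with $|\phi_{s}|<\omega^{s+1}$, so the conjecture reduces to the uniform statement of Lemma \ref{lem:BoundedIC} about $\prod_{r=1}^{F_{n}}|2\sin\pi(r\omega+\phi)|$. Where you then diverge is in how to bound that shifted product, and there your plan has a genuine gap at exactly the point you flag as the principal obstacle. Writing the shifted $B$-type factor as $\cos\pi(\omega^{n}\xi_{nt}-\phi)-\cot(\pi t/F_{n})\sin\pi(\omega^{n}\xi_{nt}-\phi)$ and expanding, the shift creates the new first-order term $-\sin(\pi\phi)\sum_{t}\cot(\pi t/F_{n})\cos(\pi\omega^{n}\xi_{nt})$. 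The $\theta$-uniformity of Lemma \ref{thm:OrderPartialSums} does not control this: the $\theta$ there is a phase of the rotation orbit $\{\theta+p\omega\}$, not an additive shift of the sine's argument, and the partial sums of $\sin\pi(\omega^{n}\xi_{\infty r}-\phi)$ are of size of order $t\,\omega^{n+1}$, not $O(\omega^{n}\log t)$. Fed into a half-range (asymmetric) summation by parts against the cotangent differences, this contributes roughly $|\sin\pi\phi|\sum_{t\le F_{n}/2}\cot(\pi t/F_{n})\sim\omega^{n+1}F_{n}\log F_{n}\sim(\omega/\sqrt{5})\log F_{n}$, which is unbounded in $n$ when $\phi_{s}$ has maximal size and fixed sign; so no uniform $C_{1},C_{2}$ emerges from that route as described.

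What is missing is the cancellation that kills this term, and it must be supplied explicitly. One repair stays close to your plan: the symmetry is deformed rather than destroyed, since by Lemma \ref{lem:baseSeq} we have $\xi_{n,F_{n}-t}=-\xi_{nt}$ and $\cot$ is odd about $t=F_{n}/2$, so pairing $t$ with $F_{n}-t$ gives $h^{(\phi)}_{nt}+h^{(\phi)}_{n,F_{n}-t}=2\cos(\pi\phi)\,h_{nt}$, reducing the first-order sum over the full period to $\cos(\pi\phi)$ times the unshifted one. The paper instead avoids re-deriving the $A_{n}B_{n}C_{n}$ decomposition altogether: it factors $\prod_{1}^{F_{n}}|2\sin\pi(r\omega+\alpha)|=P_{F_{n}}(\omega)\prod_{r}|\cos\pi\alpha+\cot\pi r\omega\,\sin\pi\alpha|$ as in \eqref{eq:PkOfAlphaMult}, uses the main theorem to bound $P_{F_{n}}(\omega)$, and then proves the cancellation bounds $\omega^{n}\sum_{r=1}^{F_{n}}\cot\pi r\omega=O(1)$ and $\omega^{2n}\sum_{r=1}^{F_{n}}\cot^{2}\pi r\omega=O(1)$ by comparison with the exact identity $\sum_{s=1}^{F_{n}-1}\cot(\pi s/F_{n})=0$ (see \eqref{eq:SumCotA}, \eqref{eq:SumCotB}), together with $\log(1+x)\ge x-x^{2}$. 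Those cotangent-sum estimates (or the pairing identity that substitutes for them) are the real content of the step; your sketch produces neither. The other elements of your outline are sound: the $t=0$ factor is bounded between positive constants because $\omega^{s}-\omega^{s+1}=\omega^{s+2}$, and the $C_{n}$-type factor is robust to an $O(\omega^{s+1})$ shift.
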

Then we have from \eqref{eq:pkw}: 
\begin{equation}
C_{1}^{m}\le P_{k}(\omega)\le C_{2}^{m}\label{eq:PkPowerBound1}
\end{equation}
Now for $m\ge1$, using \eqref{eq:Fnwn} we obtain $\log k\ge\log F_{m}=\log(\omega^{-m}/\sqrt{5})\left(1-(-1)^{m}\omega^{2m}\right)>m\log\omega^{-1}-\log\sqrt{5}-1$
which gives $m<(\log c'k)/\log\omega^{-1}$ for some constant $c'$.
So for real but not necessarily positive constants $K_{1},K_{2}$:
\begin{equation}
k^{K_{1}}<P_{k}(\omega)<k^{K_{2}}\label{eq:BoundedPk}
\end{equation}

This is essentially an amplified version of the outline proof provided
by Knill and Tangerman, although we have provided it in multiplicative
form, rather than the additive (logarithmic) form used in the aforementioned
paper. The assumption in conjecture \ref{conj:constantsExist} is
in fact correct (and it is trivial if $b_{s}=0$), but a proof does
not appear trivial for $b_{s}=1$, and so we provide  one here. 

Since the case $b_{s}=0$ is trivial we need deal only with $b_{s}=1$.
By the rules of the Fibonacci decomposition (see Lemma \ref{lem:FLest}),
$b_{r}=1$ implies $b_{r+1}=0$. Hence $k_{s}\omega=\sum_{u=s+2}^{m}b_{u}F_{u}\omega=N+\sum_{u=s+2}^{m}-b_{u}(-\omega)^{u}$
for some integer $N$. Now $\left|\sum_{u=s+2}^{m}-b_{u}(-\omega)^{u}\right|\le\omega^{s+2}(1+\omega^{2}+\omega^{4}...)<\omega^{s+1}$.
Hence the conjecture is proved if we can prove the slightly more general
assertion:
\begin{lem}
\label{lem:BoundedIC}There are real constants $C_{1},C_{2}$ satisfying
$0<C_{1}\le1\le C_{2}$ such that $C_{1}\le\prod_{1}^{F_{n}}|2\sin\pi(r\omega+\alpha)|\le C_{2}$
whenever $n\ge2$ and $\left|\alpha\right|\le\omega^{n+1}$ . 
\end{lem}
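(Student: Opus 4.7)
The plan is to adapt the decomposition $Q_n = A_n B_n C_n$ of Section \ref{sec:MainStart} to the phase-shifted product $Q_n^\alpha := \prod_{r=1}^{F_n}|2\sin\pi(r\omega+\alpha)|$, and to show that each factor is sandwiched between two positive constants uniformly in $n\ge 2$ and $|\alpha|\le\omega^{n+1}$. The first step is to replay the pairing $r\leftrightarrow F_n-r$ and the manipulations leading to \eqref{eq:QnEst2} with $\alpha$ present. Using $\cos X - \cos Y = 2\sin^2(Y/2)-2\sin^2(X/2)$ together with the Fibonacci identity $F_n\omega = F_{n-1}+(-1)^{n+1}\omega^n$, each pair product becomes
\[
|2\sin\pi(r\omega+\alpha)||2\sin\pi((F_n-r)\omega+\alpha)| = 4\bigl|\sin^2\pi(r\omega+(-\omega)^n/2) - \sin^2\pi\beta\bigr|,
\]
where $\beta := \alpha-(-\omega)^n/2$. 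The rebasing by $(-\omega)^n/2$ and the sign bookkeeping $(-1)^{(F_n-1)(F_{n-1}+1)}=1$ from Section \ref{sec:MainStart} go through unchanged; the phase shift enters only inside $\beta$, occupying the slot formerly held by $\omega^n/2$. After the substitution $t=[rF_{n-1}]$ this yields $Q_n^\alpha = A_n^\alpha \cdot B_n \cdot C_n^\alpha$, where $A_n^\alpha := F_n|2\sin\pi(F_n\omega+\alpha)|$, $C_n^\alpha := \prod_{t=1}^{F_n/2}(1-b^2/s_{nt}^2)$ with $b := 2|\sin\pi\beta|$, and $B_n$ is literally the product from Section \ref{sec:EndMain} (independent of $\alpha$).

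I would then bound each factor uniformly. $B_n$ converges to a positive limit by Theorem \ref{thm:Bn}, so is automatically sandwiched. For $A_n^\alpha$, the Fibonacci identity gives $F_n\omega+\alpha\equiv(-1)^{n+1}\omega^n+\alpha\pmod 1$, with reduced magnitude in $[\omega^n(1-\omega), \omega^n(1+\omega)]$; for $n\ge 3$ this lies in $(0, 1/2)$ so Lemma \ref{lem:ConvexSine} applies, and $F_n\omega^n=(1-(-\omega^2)^n)/\sqrt 5$ provides the matching scaling to yield explicit positive two-sided bounds, with $n=2$ handled directly. The sharp identity $\omega+1/2=\sqrt 5/2$ then gives $|\beta|\le\omega^n\sqrt 5/2$ and $b\le\pi\sqrt 5\omega^n$. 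Using the asymptotics $s_{nt}\sim\pi\omega^n u_t$ from Section \ref{sec:Cn}, the limiting ratio $b^2/s_{nt}^2$ is at most $5/u_t^2$. The identity $u_1=\sqrt 5+2$ yields $5/u_1^2 = 45-20\sqrt 5 < 1$ for the critical $t=1$ factor, and for $t\ge 2$ the elementary bound $u_t\ge 2\sqrt 5(t-1/2)$ gives $5/u_t^2<1$ and the convergence of $\sum_t 5/u_t^2$. Hence $\prod_t(1-5/u_t^2)$ is finite and strictly positive; splitting at a large threshold $T$ and applying Lemma \ref{lem:ProdBounds} to the tail (with the finite-$n$ error terms handled exactly as in Section \ref{sec:Cn}) provides a uniform positive lower bound for $C_n^\alpha$, while $C_n^\alpha\le 1$ is immediate. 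Multiplying the three uniform bounds and padding the constants so that $C_1\le 1\le C_2$ completes the proof.

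The main obstacle is the lower bound on the $t=1$ factor of $C_n^\alpha$. In the unperturbed proof the ``$s_{n0}$'' slot is occupied by the actual minimum $s_{n0}$ of the $s_{nt}$, making $s_{n0}/s_{n1}$ tiny and the first factor of $C_n$ very close to $1$. The phase shift inflates this slot by up to a factor of $\sqrt 5$, pushing $b/s_{n1}$ to roughly $\sqrt 5/(\sqrt 5+2)\approx 0.528$. The argument survives only because of the precise arithmetic identity $u_1=\sqrt 5+2 > \sqrt 5$, which provides just enough margin for $1-b^2/s_{n1}^2$ to stay bounded away from zero; once that numerical hurdle is cleared, the rest of the analysis follows the template of Section \ref{sec:Cn}.
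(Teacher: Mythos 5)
Your argument is correct, but it is not the route the paper takes. The paper never revisits the $A_nB_nC_n$ machinery: it writes $\prod_{r=1}^{F_n}|2\sin\pi(r\omega+\alpha)|=P_{F_n}(\omega)\prod_{r=1}^{F_n}\left|\cos\pi\alpha+\cot\pi r\omega\,\sin\pi\alpha\right|$, takes logarithms, and bounds the resulting sum from above via $\log(1+x)\le x$ and from below via $\log(1+x)\ge x-x^2$ (valid for $x>-0.683$, after checking the perturbation is at most $0.681$ in size). Everything then reduces to two-sided estimates on the cotangent Birkhoff sums: $\sum_{r=1}^{F_n}\cot\pi r\omega=O(\omega^{-n})$, proved by comparison with the exactly vanishing rational sum $\sum_s\cot(\pi s/F_n)=0$ with careful treatment of the near-singular terms $r=F_{n-1},F_n$, and $\sum_{r=1}^{F_n}\cot^2\pi r\omega=O(\omega^{-2n})$; since $\sin\pi\alpha=O(\omega^{n+1})$ these stay bounded. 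Your route instead reruns the pairing and rebasing of Section \ref{sec:MainStart} with the phase absorbed into $\beta=\alpha-(-\omega)^n/2$, which is legitimate (the change of variables $r\mapsto F_n-r$ and the sign bookkeeping are untouched by $\alpha$), reuses Theorem \ref{thm:Bn} verbatim for $B_n$, and only has to redo the $C_n$ analysis with $s_{n0}$ replaced by $b\le\pi\sqrt5\,\omega^n$; you correctly identify that the whole thing hinges on the margin $u_1=\sqrt5+2>\sqrt5$, so that the critical $t=1$ factor stays near $1-(45-20\sqrt5)\approx 0.72$. The trade-off: your approach is more economical given Sections \ref{sec:MainStart}--\ref{sec:EndMain} and is structurally illuminating (the admissible phase just inflates the perturbation slot by at most $\sqrt5$), whereas the paper's approach is independent of that machinery, produces explicit constants, and yields the cotangent-sum bounds \eqref{eq:SumCotA}--\eqref{eq:SumCotB} which the authors flag as of independent interest and which are not tied to phases of the special rebased form. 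One small point to make explicit in your write-up: the error terms in the Section \ref{sec:Cn}-style estimates (and the positivity of every factor $1-b^2/s_{nt}^2$) are only controlled for $n$ large, so the finitely many small cases $n=2,3,\dots$ need a direct check, using $\|r\omega\|\ge\omega^n>\omega^{n+1}\ge|\alpha|$ to see the product cannot vanish on the compact range of $\alpha$ --- the paper does the analogous verification for $n=2,3$ at the end of its own proof, so this is routine but should be stated.
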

Note the lemma does not hold for $n=1$ as $\prod_{1}^{F_{n}}|2\sin\pi(r\omega+\alpha)|=0$
for $\alpha=\omega^{2}$.

We begin by expanding the sine product as follows:

\begin{eqnarray}
\prod_{1}^{F_{n}}|2\sin\pi(r\omega+\alpha)| & = & \prod_{1}^{F_{n}}|2\sin\pi(r\omega)|\left|\cos\pi\alpha+\cot\pi r\omega.\sin\pi\alpha\right|\label{eq:PkOfAlphaMult}
\end{eqnarray}

For $n\ge2$ and $\left|\alpha\right|\le\omega^{n+1}$ it easy to
calculate that $\cos\pi\alpha+\cot\pi r\omega.\sin\pi\alpha>0$. We
can therefore take logs of the product above to obtain:

\begin{equation}
\log\prod_{1}^{F_{n}}|2\sin\pi(r\omega+\alpha)|=\log P_{F_{n}}(\omega)+\sum_{1}^{F_{n}}\log\left(1-2\sin^{2}\frac{\pi\alpha}{2}+\cot\pi r\omega.\sin\pi\alpha\right)\label{eq:PkOfAlpha}
\end{equation}

Since $P_{F_{n}}(\omega)$ is already suitably bounded by the main
result of this paper, it remains to show that the log sum is bounded
above and below.

We begin with establishing the upper bound as this is slightly more
straightforward than the lower bound.

\subsubsection{The upper bound on the growth rate}

We use $\log(1+x)\le x$ for $x\in(-1,1]$), and $\sin x>2x/\pi$
from Lemma \ref{lem:ConvexSine} to obtain:

\begin{eqnarray}
\sum_{r=1}^{F_{n}}\log\left(1-2\sin^{2}\frac{\pi\alpha}{2}+\cot\pi r\omega.\sin\pi\alpha\right) & < & \sum_{r=1}^{F_{n}}\left(-2\sin^{2}\frac{\pi\alpha}{2}+\cot\pi r\omega.\sin\pi\alpha\right)\nonumber \\
 & = & -2F_{n}\sin^{2}\frac{\pi\alpha}{2}+\sin\pi\alpha\sum_{r=1}^{F_{n}}\cot\pi r\omega\nonumber \\
 & < & \left(-2\frac{\omega^{-n}}{\sqrt{5}}\left(1+\omega^{2n}\right)\omega^{2n+2}\right)+\left(\pi\omega^{n+1}\left|\sum_{r=1}^{F_{n}}\cot\pi r\omega\right|\right)\label{eq:cotsin}
\end{eqnarray}

We now examine the sum $S_{F_{n}}(\omega)=\sum_{r=1}^{F_{n}}\cot\pi r\omega$.
The sum $S_{k}(\omega)=\sum_{r=1}^{k}\cot\pi r\omega$ clearly has
interesting relationships with our original product $P_{k}(\omega)=\prod_{r=1}^{k}|2\sin\pi r\omega|$,
and indeed it shows definite self-similar characteristics over Fibonacci
intervals (see Figure \ref{fig:CotSum}). We conjecture that the renormalised
functions are converging, but for our current purposes we need only
to establish bounds, which we proceed to do as follows.

\begin{figure}
\noindent \begin{centering}
\includegraphics[scale=0.5]{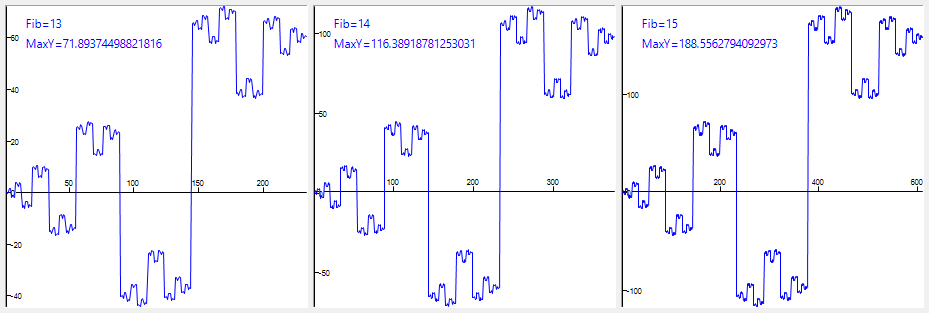}
\par\end{centering}

\protect\caption{\label{fig:CotSum}The renormalised graphs of $(-1)^{n}\sum_{1}^{k}\cot\pi r\omega$
over the Fibonacci interval $[1,F_{n}-1]$ for $n=13,14,15$. Note
the remarkable scaling similarity.}
\end{figure}

For $n\ge3$, let $s(r)=[rF_{n-1}]$ so that as $r$ runs through
the values $1,...,F_{n}-1$ so does $s$.

We consider $n$ odd so that $\omega=\left(F_{n-1}+\omega^{n}\right)/F_{n}$.
It follows that the fractional part of $r\omega$ lies in the interval
$(s/F_{n},\,(s+1)/F_{n})$. Using $\cot x>\cot\left(x+\theta\right)$
when $x,x+\theta\in(0,\pi)$ and $\theta>0$ gives us $\cot\pi s/F_{n}>\cot\pi r\omega$.
We now use the fact that $\cot\pi x+\cot\pi(1-x)=0$ to obtain:
\begin{equation}
\sum_{r=1}^{F_{n}-1}\cot\pi r\omega<\sum_{r=1}^{F_{n}-1}\cot\pi\frac{s}{F_{n}}=0\label{eq:oddCotUpper}
\end{equation}

Similarly we have $\cot\pi r\omega>\cot\pi(s+1)/F_{n})$ but now $s+1$
runs through the values $2,..F_{n}$. However the value of $s+1=F_{n}$
results in a singularity of $\cot\pi$ so we treat separately the
case $s=F_{n}-1$. Let $r^{*}$ be the value of $r$ which satisfies
$s(r^{*})=F_{n}-1$. Then from Lemma \ref{lem:Fn-1Inverse} we have
$[F_{n-1}]^{-1}=[-F_{n-1}]$ for $n$ odd, and so $[r^{*}]=[F_{n-1}]^{-1}[F_{n}-1]=[F_{n-1}]$,
giving $r^{*}=F_{n-1}$. This gives us:
\begin{eqnarray}
\sum_{r=1}^{F_{n}-1}\cot\pi r\omega & > & \sum_{s=1}^{F_{n}-2}\cot\pi\frac{s+1}{F_{n}}+\cot\pi r^{*}\omega\nonumber \\
 & = & \left(\sum_{s=1}^{F_{n}-1}\cot\pi\frac{s}{F_{n}}-\cot\frac{\pi}{F_{n}}\right)+\cot\left(\pi F_{n-1}\omega\right)\nonumber \\
 & = & \left(0-\cot\frac{\pi}{F_{n}}\right)-\cot\pi\omega^{n-1}\nonumber \\
 & > & -\frac{1}{\pi}\left(F_{n}+\omega^{-(n-1)}\right)\,\textrm{for odd }n\ge3\label{eq:oddCotLower}
\end{eqnarray}

But for $n$ odd, $0<\cot\pi F_{n}\omega=\cot\pi\omega^{n}<1/\pi\omega^{n}$
and so we can add this to inequalities \eqref{eq:oddCotUpper} and
\eqref{eq:oddCotLower} to obtain for odd $n\ge3$, that: 
\begin{equation}
-\frac{1}{\pi}\left(\frac{1}{\sqrt{5}}\left(1+\omega^{2n}\right)+\omega\right)<\omega^{n}\sum_{r=1}^{F_{n}}\cot\pi r\omega<\frac{1}{\pi}\label{eq:SumCotA}
\end{equation}

By reversing signs appropriately, the same argument establishes an
equivalent result for even $n\ge4$, which in fact is easily verified
to hold for $n=2$: 

\begin{equation}
\frac{1}{\pi}\left(\frac{1}{\sqrt{5}}\left(1+\omega^{2n}\right)+\omega\right)>\omega^{n}\sum_{r=1}^{F_{n}}\cot\pi r\omega>\frac{-1}{\pi}\label{eq:SumCotB}
\end{equation}

In both cases (even and odd) the left hand term is slightly larger
in absolute value than the right. From \eqref{eq:cotsin}, we therefore
obtain for $n\ge2$:
\begin{eqnarray*}
\sum_{r=1}^{F_{n}}\log\left(1-2\sin^{2}\frac{\pi\alpha}{2}+\cot\pi r\omega.\sin\pi\alpha\right) & < & \left(-2\frac{\omega^{n+2}}{\sqrt{5}}\left(1+\omega^{2n}\right)\right)+\omega\left(\frac{1}{\sqrt{5}}\left(1+\omega^{2n}\right)+\omega\right)\\
 & < & \omega\left(\frac{1}{\sqrt{5}}+\omega\right)
\end{eqnarray*}
This establishes the upper bound we needed, and also in \eqref{eq:PkOfAlphaMult}
we now have for $n\ge2$: 
\[
\prod_{1}^{F_{n}}|2\sin\pi(r\omega+\alpha)|<P_{F_{n}}(\omega)e^{\omega\left(\frac{1}{\sqrt{5}}+\omega\right)}
\]

This now also establishes the upper bound in \eqref{eq:PkPowerBound1}
and hence also in \eqref{eq:BoundedPk}. We now turn to the lower
bound.

\subsubsection{The lower bound on the growth rate}

For the upper bound we were able to use the standard result that $\log(1+x)>x$.
We now need a lower bound for the logarithm The following lemma provides
this:
\begin{lem}
For real $x>-0.683$ we have $\log(1+x)\ge x-x^{2}$ \end{lem}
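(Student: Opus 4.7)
The plan is to set $f(x) = \log(1+x) - x + x^2$ on $(-1,\infty)$ and show $f(x) \ge 0$ for $x \ge -0.683$. First I would differentiate and clear the common denominator $1+x$, obtaining
\[
f'(x) = \frac{1}{1+x} - 1 + 2x = \frac{1 - (1+x) + 2x(1+x)}{1+x} = \frac{x(1+2x)}{1+x}.
\]
This factored form makes the sign of $f'$ readable by inspection.

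Next I would carry out the sign analysis. The critical points are $x=0$ and $x=-1/2$. Checking the three sign regions: on $(-1,-1/2)$ both $x$ and $1+2x$ are negative while $1+x > 0$, so $f' > 0$; on $(-1/2, 0)$ we have $x < 0 < 1+2x$ and $1+x > 0$, so $f' < 0$; on $(0, \infty)$ both $x$ and $1+2x$ are positive, so $f' > 0$. Thus $f$ increases on $(-1,-1/2)$, has a local maximum at $x=-1/2$, decreases on $(-1/2, 0)$, and increases on $(0,\infty)$ with a local minimum at $x=0$. Since $f(0)=0$, the local minimum value is $0$, and the monotonicity pattern immediately gives $f(x) \ge 0$ for all $x \in [-1/2, \infty)$ without further work.

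The only remaining interval is $(-0.683, -1/2)$, and here $f$ is monotonically increasing, so it suffices to verify $f(-0.683) \ge 0$. A direct numerical estimate gives
\[
f(-0.683) = \log(0.317) + 0.683 + 0.466489,
\]
and using $\log(0.317) \approx -1.1488$ yields $f(-0.683) \approx 5\times 10^{-4} > 0$, which finishes the proof.

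The main (minor) obstacle is purely numerical: the constant $0.683$ is chosen just below the actual negative root of $f$, which lies near $x \approx -0.6838$, so the margin at the endpoint is thin and the numerical verification must be done with enough precision (e.g.\ via an explicit enclosure of $\log(0.317)$) to be rigorous. One could alternatively replace the pointwise evaluation by an interval-arithmetic bound or by invoking the truncated Taylor series $\log(1+x) = x - x^2/2 + x^3/3 - \cdots$ and bounding the tail, but the monotonicity argument above reduces everything to one unambiguous numerical check.
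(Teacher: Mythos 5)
Your proof is correct and follows essentially the same route as the paper: define $f(x)=\log(1+x)-x+x^{2}$, locate the critical points $x=0,-1/2$ via $f'(x)=x(1+2x)/(1+x)$, conclude $f\ge0$ on $[-1/2,\infty)$, and reduce the remaining interval to monotonicity plus a single numerical check near $x=-0.683$ (the paper simply notes the root lies just below $-0.683$). Incidentally, your sign determination on $(-1,-1/2)$ (derivative positive, so $f$ decreases as $x$ decreases) is the correct one; the paper's wording there contains a small slip, though its conclusion matches yours.
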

\begin{proof}
For $x>-1$ put $f(x)=\log(1+x)-(x-x^{2})$. Note the function is
continuous on $(-1,\infty)$ and that $f(0)=0$. It is easy to verify
that this has critical points at $x=0,-0.5$ and the derivative is
positive on $(0,\infty)$ and negative on $(-0.5,0)$ so that the
function itself is positive on these two intervals. On $(-1,-0.5)$
the derivative is negative so the function descends with descending
$x$ from its maximum at $x=-0.5$ to a zero in $(-1,-0.5)$. A numerical
calculation shows the root lies just below $x=-0.683$.
\end{proof}
We wish to apply the lemma to the expression $\sum_{r=1}^{F_{n}}\log\left(1-2\sin^{2}\frac{\pi\alpha}{2}+\cot\pi r\omega.\sin\pi\alpha\right)$
from \eqref{eq:cotsin}. To do this we must first establish that $-2\sin^{2}\frac{\pi\alpha}{2}+\cot\pi r\omega.\sin\pi\alpha>-0.683$.
Now for $n\ge4$ we have:

\begin{eqnarray*}
|-2\sin^{2}\frac{\pi\alpha}{2}+\cot\pi r\omega.\sin\pi\alpha| & < & 2\frac{\pi^{2}\alpha^{2}}{4}+\pi\alpha\cot\pi\omega^{n}\\
 & < & \frac{\pi^{2}\omega^{2n+2}}{2}+\pi\omega^{n+1}\frac{1}{\pi\omega^{n}(1-\pi^{2}\omega^{2n}/6)}\\
 & < & \frac{\pi^{2}\omega^{10}}{2}+\frac{\omega}{(1-\pi^{2}\omega^{8}/6)}\\
 & < & 0.681
\end{eqnarray*}

We can now apply the lemma to obtain: 
\begin{eqnarray}
\sum_{r=1}^{F_{n}}\log\left(1-2\sin^{2}\frac{\pi\alpha}{2}+\cot\pi r\omega.\sin\pi\alpha\right) & \ge & \sum_{r=1}^{F_{n}}\left(-2\sin^{2}\frac{\pi\alpha}{2}+\cot\pi r\omega.\sin\pi\alpha\right)\nonumber \\
 &  & -\sum_{r=1}^{F_{n}}\left(-2\sin^{2}\frac{\pi\alpha}{2}+\cot\pi r\omega.\sin\pi\alpha\right)^{2}\nonumber \\
 & = & \sum_{r=1}^{F_{n}}-2\sin^{2}\frac{\pi\alpha}{2}-4\sin^{4}\frac{\pi\alpha}{2}\nonumber \\
 &  & +\sum_{r=1}^{F_{n}}\left(1+4\sin^{2}\frac{\pi\alpha}{2}\right)\cot\pi r\omega.\sin\pi\alpha-\left(\cot\pi r\omega.\sin\pi\alpha\right)^{2}\nonumber \\
 & \ge & F_{n}\left(-2\left(\frac{\pi\omega^{n+1}}{2}\right)^{2}-4\left(\frac{\pi\omega^{n+1}}{2}\right)^{4}\right)\label{eq:LowerBound}\\
 &  & -\left(1+4\left(\frac{\pi\omega^{n+1}}{2}\right)^{2}\right)\left|\sum_{r=1}^{F_{n}}\cot\pi r\omega.\sin\pi\alpha\right|\nonumber \\
 &  & -\sum_{r=1}^{F_{n}}\left(\cot\pi r\omega.\sin\pi\alpha\right)^{2}\nonumber 
\end{eqnarray}

The first term is clearly bounded below. From \eqref{eq:SumCotA},\eqref{eq:SumCotB}
and for $n\ge2$, we have $\left|\sum\cot\pi r\omega.\sin\pi\alpha\right|<\omega\left(\frac{1}{\sqrt{5}}\left(1+\omega^{2n}\right)+\omega\right)$,
and so the second term is also bounded below. It remains to show that
the third term is bounded below. Using Lemma \ref{lem:ConvexSine}
(and allowing for $\alpha=0$) we have for $n\ge1$: 
\begin{eqnarray*}
\sum_{r=1}^{F_{n}}\left(\cot\pi r\omega.\sin\pi\alpha\right)^{2} & \le & \left(\pi\alpha\right)^{2}\sum_{r=1}^{F_{n}}\cot^{2}\pi r\omega
\end{eqnarray*}

Using the same argument with $s(r)=[rF_{n-1}]$ as for the upper bound,
we obtain for $n\ge3$, $\cot^{2}\pi r\omega<\cot^{2}\pi s/F_{n}$
for $0\le s\le\left\lfloor \frac{1}{2}F_{n}-1\right\rfloor $ and
for $n\ge4$ it also gives $\cot^{2}\pi r\omega<\cot^{2}(\pi(s+1)/F_{n})$
for $\left\lceil \frac{1}{2}F_{n}\right\rceil \le s\le F_{n}-1$.
There is a special case: when $n\ge4$ and $F_{n}$ is odd there is
an uncovered interval $[\frac{1}{2}(F_{n}-1)/F_{n},\frac{1}{2}(F_{n}+1)/F_{n}]$,
but here again $\cot^{2}\pi r\omega<\cot^{2}\pi s/F_{n}$ for $s=\frac{1}{2}(F_{n}-1)=\left\lfloor \frac{1}{2}F_{n}\right\rfloor $.
We are now almost ready to sum over $r$, but we again need to take
care of singularities, and these occur this time at $s=0,F_{n}-1$,
corresponding to $r=F_{n},[(-1)^{n}F_{n-1}]$. Hence for $n\ge4$,
using $\left|\cot x\right|<\left|1/x\right|$
\begin{eqnarray}
\sum_{r=1}^{F_{n}}\cot^{2}\pi r\omega & < & \left(\sum_{s=1}^{\left\lfloor \frac{1}{2}F_{n}\right\rfloor }\cot^{2}\pi\frac{s}{F_{n}}+\sum_{s=\left\lceil \frac{1}{2}F_{n}\right\rceil }^{F_{n}-2}\cot^{2}\pi\frac{s+1}{F_{n}}\right)+\cot^{2}\pi F_{n}\omega+\cot^{2}\pi F_{n-1}\omega\nonumber \\
 & < & 2\sum_{s=1}^{\left\lfloor \frac{1}{2}F_{n}\right\rfloor }\left(\frac{F_{n}}{\pi s}\right)^{2}+\left(\frac{1}{\pi\omega^{n}}\right)^{2}+\left(\frac{1}{\pi\omega^{n-1}}\right)^{2}\nonumber \\
 & < & \frac{1}{\pi^{2}}F_{n}^{2}\left(\frac{\pi^{2}}{6}\right)+\frac{1+\omega^{2}}{\pi^{2}\omega^{2n}}\label{eq:cot2ub}
\end{eqnarray}

Hence for $n\ge4$:
\begin{eqnarray*}
\sum_{r=1}^{F_{n}}\left(\cot\pi r\omega.\sin\pi\alpha\right)^{2} & < & \pi^{2}\omega^{2n+2}\left(\frac{1}{6}\frac{\omega^{-2n}}{5}\left(1+\omega^{2n}\right)+\frac{1+\omega^{2}}{\pi^{2}\omega^{2n}}\right)\\
 & = & \frac{\omega^{2}}{30}\left(\pi^{2}(1+\omega^{8})+30(1+\omega^{2})\right)
\end{eqnarray*}
Hence the third term in \eqref{eq:LowerBound} is also bounded below,
and the lower bound we needed for this log sum is also established
for $n\ge4$. Hence for $n\ge4$, \eqref{eq:PkOfAlphaMult} is bounded
below by a strictly positive constant, and in fact it is easily verified
that this is also true for $n=2,3$, finally establishing Lemma \ref{lem:BoundedIC}.

This now also establishes the lower bound in \eqref{eq:PkPowerBound1}
and hence also in \eqref{eq:BoundedPk}.

\section{Conclusion }

In this paper we studied Sudler's sine product in the important special
case where $\omega$ is the golden ratio, thereby placing the work
of Knill and Tangerman~\cite{Knill2011} on a rigorous footing. We
now discuss directions for further research suggested by the work
presented here and by the extensive discussion of open questions in~\cite{Knill2011}.

In studies of quasi-periodic dynamics, it is usual to proceed from
the golden mean, through quadratic irrationals to more general irrationals,
sometimes with arithmetic conditions to overcome small-divisor obstructions.
Sudler's sine product is well suited for such an approach. It is likely
that the methods in sections~\ref{sec:Overview}--\ref{sec:EndMain}
may be extended to all quadratic irrationals (with appropriate modification
to take account of the (eventual) periodicity of the continued fraction
expansion) and may provide a foundation to study the case of arbitrary
irrational $\omega$, leading to refinements of the norm and peak
results presented in section \ref{sec:Survey}.

Indeed, following Knill and Tangerman~\cite{Knill2011}, we conjecture
that for quadratic $\omega$ with a period $\ell$ continued fraction
($\ell\ge1$) and with rational convergents $p_{k}/q_{k}$, the Sudler
product for $n=q_{k}$ will converge to a periodic sequence of period
dividing $\ell$. Moreover, for $\omega$ satisfying other suitable
arithmetic conditions such as Diophantine or Brjuno, the Sudler product
will be bounded for $n=q_{k}$, where $p_{k}/q_{k}$ are the rational
convergents of $\omega$. It is likely that a renormalisation approach
will elucidate the overall structure of the Sudler product for arbitrary
irrational $\omega$.

Sudler's sine product appears in several areas of pure and applied
mathematics. In the dynamical context, it arises in the renormalisation
analysis of strange non-chaotic attractors for zero phase (see~\cite{Kuznetsov1995a}).
An analysis of non-zero phase leads to the more complex product 
\begin{equation}
\prod_{r=1}^{n}2\sin\pi(r\omega+\alpha)\,,
\end{equation}
the analysis of which appears difficult in general, although we studied
a special case (in which $\alpha$ decreases with $n$) in section
\ref{sec:Consequences} of this paper. Both the special case and the
general case require obtaining the growth rate of the series 

\begin{equation}
\sum_{r=1}^{F_{n}}\cot\pi r\omega\,.
\end{equation}

We studied this growth rate in the case of the golden mean also in
section \ref{sec:Consequences}. Again the results need to be generalised
along the lines of the programme outlined above.

Coupled with the work of Knill and Lesieutre \cite{Knill2012} (in
which they used Herman's Denjoy-Koksma result \cite{HermanDenjoy1979}
to study the generalised product 
\begin{equation}
P_{n}(f,\omega)=\prod_{r=1}^{n}|f(r\omega)|\label{eq:gen_prod}
\end{equation}
where $\log|f|$ is of bounded variation and $\omega$ is Diophantine),
our results suggest that a fruitful research direction would be to
adapt the methods in this paper to study in detail the product~\eqref{eq:gen_prod},
first in the golden mean case and then for more general irrationals.
It is likely that the symmetry properties of $f$ will prove important
in the application of the methods presented here.

\part*{Appendices}

\appendix

\section{\label{sec:SineAppendix}Basic results on sums and products of sines}

\subsection{Sums of sines}

We shall need the following result of Lagrange:

For $e^{ix}\ne1$, ie $x\ne2r\pi$,
\[
\sum_{k=1}^{n}e^{i(\theta+kx)}=e^{i(\theta+x)}\frac{e^{inx}-1}{e^{ix}-1}=e^{i\theta}e^{ix(n+1)/2}\frac{\sin nx/2}{\sin x/2}
\]

Equating imaginary parts

\begin{eqnarray}
\sum_{k=1}^{n}\sin(\theta+kx) & = & \sin(\theta+\frac{(n+1)x}{2})\sin\frac{nx}{2}/\sin\frac{x}{2}\nonumber \\
 & = & \frac{\cos(\theta+\frac{x}{2})-\cos(\theta+(n+\frac{1}{2})x)}{2\sin\frac{x}{2}}\label{eq:SumSin}
\end{eqnarray}
And, applying the transform $\theta\longmapsto\theta+\pi/2$:
\begin{equation}
\sum_{k=1}^{n}\cos(\theta+kx)=\frac{\sin(\theta+(n+\frac{1}{2})x)-\sin(\theta+\frac{x}{2})}{2\sin\frac{x}{2}}
\end{equation}

We will also need the related result obtained by differentiating the
previous identity:

\begin{eqnarray*}
\sum_{k=1}^{n}-k\sin(\theta+kx) & = & \frac{(n+\frac{1}{2})\cos(\theta+(n+\frac{1}{2})x)-\frac{1}{2}\cos(\theta+\frac{x}{2})}{2\sin\frac{x}{2}}-\frac{\left(\sin(\theta+(n+\frac{1}{2})x)-\sin(\theta+\frac{x}{2})\right)\frac{1}{2}\cos\frac{x}{2}}{2\sin^{2}\frac{x}{2}}
\end{eqnarray*}

Simplifying:

\begin{equation}
\sum_{k=1}^{n}k\sin(\theta+kx)=\frac{\sin(\theta+nx)-\sin\theta-2n\cos(\theta+(n+\frac{1}{2})x)\sin\frac{x}{2}}{4\sin^{2}\frac{x}{2}}\label{eq:SumNSin}
\end{equation}

\subsection{Products of sines}

We now derive a number of preliminary results from the factorisation:

\begin{eqnarray}
z^{2n}-2\cos(\theta)z^{n}+1 & = & (z^{n}-e^{i\theta})(z^{n}-e^{-i\theta})\\
 & = & \prod_{r=0}^{n-1}(z-e^{i(\theta+2\pi r)/n})(z-e^{-i(\theta+2\pi r)/n})\nonumber \\
 & = & \prod_{r=0}^{n-1}(z^{2}-2\cos(\frac{\theta+2\pi r}{n})\, z+1)\nonumber 
\end{eqnarray}

Putting $\phi=2n\theta$, evaluating at $z=1$ gives $4\sin^{2}(n\phi)=\prod_{0}^{n-1}4\sin(\phi+r\pi/n)$,
which after consideration of signs gives 
\begin{equation}
\prod_{0}^{n-1}2\sin(\phi+\frac{\pi r}{n})=2\sin(n\phi)\label{eq:GenAnglen}
\end{equation}

Taking logs and differentiating with respect to $\phi$ gives
\begin{equation}
\sum_{r=0}^{n-1}\cot(\phi+\frac{\pi r}{n})=n\cot n\phi
\end{equation}

Similarly evaluating at $z=-1$ gives
\begin{eqnarray}
\prod_{r=0}^{n-1}2\cos(\phi+r\pi/n)=(-1)^{\frac{n-1}{2}}2\cos(n\phi) &  & n\,\textrm{odd}\label{eq:OddAnglen}\\
\prod_{r=0}^{n-1}2\cos(\phi+r\pi/n)=(-1)^{\frac{n}{2}}2\sin(n\phi) &  & n\,\textrm{even}\label{eq:EvenAnglen}
\end{eqnarray}

And combining \eqref{eq:GenAnglen} with \eqref{eq:OddAnglen},\eqref{eq:EvenAnglen}
gives (substituting the symbol $\phi$ for $2\phi$) 

\begin{eqnarray}
\prod_{r=0}^{n-1}2\sin(\phi+2r\pi/n)=(-1)^{\frac{n-1}{2}}2\sin(n\phi) &  & n\,\textrm{odd}\label{eq:OddDouble}\\
\prod_{r=0}^{n-1}2\sin(\phi+2r\pi/n)=(-1)^{\frac{n}{2}}2(1-\cos(n\phi)) &  & n\,\textrm{even}\label{eq:EvenDouble}
\end{eqnarray}

These identities vanish when $\phi=r\pi$, but dividing by the $r=0$
term and allowing $\phi\rightarrow0$, we obtain:

\begin{eqnarray}
\prod_{r=1}^{n-1}2\sin(\frac{\pi r}{n}) & = & n\label{eq:lemSinProduct}\\
\prod_{r=1}^{n-1}2\sin(\frac{2r\pi}{n}) & = & (-1)^{\frac{n-1}{2}}n\qquad n\,\textrm{odd}\\
\prod_{\begin{array}{c}
r=1\\
r\ne n/2
\end{array}}^{n-1}2\sin(\frac{2r\pi}{n}) & = & (-1)^{\frac{n}{2}-1}\left(\frac{n^{2}}{4}\right)\qquad n\,\textrm{even}
\end{eqnarray}

\section{Fibonacci Numbers}

\noindent We will make use of some standard results about Fibonacci
numbers $F_{n}$ (defined for $n\ge0$ by $F_{0}=0,F_{1}=1$ and $F_{n+1}=F_{n}+F_{n-1}$),
which we quote without proof:
\begin{eqnarray}
F_{n+1}F_{n-1}-F_{n}^{2} & = & (-1)^{n}\label{eq:FibProduct}\\
F_{n}\textrm{ is even iff }n & = & 3k\textrm{ for some }k\ge0\label{eq:FnEven}\\
F_{n} & = & \frac{1}{\sqrt{5}}\left(\omega^{-n}-(-\omega)^{n}\right)\label{eq:Fn=00003D}\\
F_{n}\omega & = & F_{n-1}-(-\omega)^{n}\label{eq:Fnw}\\
F_{n}\omega^{n} & = & \frac{1}{\sqrt{5}}\left(1-(-1)^{n}\omega^{2n}\right)\label{eq:Fnwn}\\
\omega & = & \frac{F_{n-1}}{F_{n}}-\frac{(-\omega)^{n}}{F_{n}}\label{eq:w}
\end{eqnarray}

We recast the previous two results for estimating purposes (noting
$0<\omega<1$) as
\begin{eqnarray}
F_{n} & = & \frac{\omega^{-n}}{\sqrt{5}}+O(F_{n}^{-1})\label{eq:FnEst}\\
\omega & = & \frac{F_{n-1}}{F_{n}}+O(\omega^{2n})\label{eq:wEstimate}
\end{eqnarray}

\bibliographystyle{unsrt}
\bibliography{Verschueren}

\end{document}